\theoremstyle{plain}
    \newtheorem{theorem}                    {Theorem}       [section]
    \newtheorem{lemma}      [theorem]       {Lemma}
    \newtheorem{corollary}  [theorem]       {Corollary}
    \newtheorem{proposition}[theorem]       {Proposition}
    \newtheorem{definition} [theorem]       {Definition}
    \newtheoremstyle{alexdef}
  {.8cm}
  {.8cm}
  {\rm }
  {}
  {\bf}
  {.}
  {.5em}
  {}
\theoremstyle{alexdef}
\newtheorem{example}[theorem]{Example}
\newtheorem{remark}[theorem]{Remark}
\newcommand{\pw}{\Pi_1^{t,\ab}}
\newcommand{\Aut}{\operatorname{Aut}}
\newcommand{\pro}{{\operatorname{pro-grps}}}
\newcommand{\Hom}{\operatorname{Hom}}
\newcommand{\Ext}{\operatorname{Ext}}
\newcommand{\Gal}{\operatorname{Gal}}
\newcommand{\Pic}{\operatorname{Pic}}
\newcommand{\Spec}{\operatorname{Spec}}
\newcommand{\WS}{{\operatorname{WS}}}
\newcommand{\coker}{\operatorname{coker}}
\newcommand{\id}{\operatorname{id}}
\newcommand{\CH}{\operatorname{CH}}
\renewcommand{\lim}{\operatornamewithlimits{lim}}
\newcommand{\colim}{\operatornamewithlimits{colim}}
\newcommand{\f}{{\mathcal F}}
\newcommand{\Z}{{{\mathbb Z}}}
\newcommand{\F}{{{\mathbb F}}}
\newcommand{\Sch}{\text{\rm Sch}}
\newcommand{\et}{{\text{\rm et}}}
\newcommand{\ab}{{\text{\rm ab}}}
\newcommand{\ch}{\operatorname{char}}
\newcommand{\Cor}{\operatorname{Cor}}
\newcommand{\Frob}{\operatorname{Frob}}
\renewcommand{\div}{\operatorname{div}}
\renewcommand{\O}{{\cal O}}
\newcommand{\I}{\operatorname{I}}
\newcommand{\rec}{\operatorname{rec}}
\newcommand{\red}{\text{red}}
\newcommand{\rS}{\mathrm{S}}
\newcommand{\rW}{\mathrm{W}}
\let\@fnsymbol\@arabic
\title{\bf Tame Class Field Theory for Singular Varieties over Finite Fields}
\author{Thomas Geisser\footnote{Supported by JSPS Grant-in-Aid (B) 23340004}\;  and Alexander Schmidt\footnote{Supported by DFG-Forschergruppe FOR 1920}}
\begin{document}
\maketitle

\section{Introduction}
In the 1980's, Kato and Saito (based on ideas of
Bloch) generalized the  class field theory for smooth, projective curves over finite fields to smooth, projective varieties  of arbitrary dimension \cite{katosaito}:
The map from the free abelian group generated by the closed points which
sends a generator $x\in X$ to the image of the Frobenius of $k(x)$
under  $\pi_1^\ab(k(x))\to \pi_1^\ab(X)$ factors through
the Chow group of zero cycles, and induces an isomorphism
\[r_X:  \CH_0(X)\stackrel{\sim}{\longrightarrow} \pi_1^\ab(X)_\rW.\]
Here $\pi_1^\ab(X)_\rW$ is the subgroup of elements in $\pi_1^\ab(X)$ whose images in the absolute Galois group of the finite base field are integral Frobenius powers.

If $X$ is not necessarily projective but still smooth, then Schmidt and Spie{\ss}
\cite{schmidtspiess,schmidt} showed that
the same result still holds if one replaces the Chow group by
Suslin homology and the fundamental group by its tame version:  The reciprocity map induces an isomorphism of
finitely generated abelian groups
\[ r_X: H_0^\rS(X,\Z) \stackrel{\sim}{\longrightarrow} \pi_1^{t,\ab}(X)_\rW.\]
This result does not extend to non-smooth schemes: the example of the node shows that $r_X$ is neither injective nor surjective  in general.
If $X$ is normal, then $r_X$ is surjective  but an example of Matsumi-Sato-Asakura \cite{mas}
shows that $r_X$ may have a nontrivial kernel.

\medskip
In this paper, we show that the result of Schmidt and Spie{\ss} can
be generalized to singular varieties if one uses a refined version of Suslin homology
on the one hand,
and replaces the fundamental group by the enlarged fundamental group of \cite[X, \S6]{sga3}
on the other hand. We denote the abelian enlarged tame fundamental group by $\Pi_1^{t,\ab}$ in order to distinguish it from the usual abelian tame fundamental group $\pi_1^{t,\ab}$, which is its profinite completion. The groups coincide if $X$ geometrically unibranch (e.g.,  normal).
Our first result is:

\begin{theorem}\label{fingen}
For any connected scheme $X$, separated and of finite type over a finite field,
the pro-group $\pw(X)_\rW $ is isomorphic to a (constant) finitely generated abelian group.
\end{theorem}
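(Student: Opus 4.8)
The plan is to proceed by dévissage on $\dim X$, reducing the general case to that of a normal variety and the normal case to the smooth one, where the theorem of Schmidt and Spie{\ss} quoted above already identifies $\pw(X)_\rW=\pi_1^{t,\ab}(X)_\rW$ with a finitely generated abelian group. Since $\pw$ is by construction a pro-group whose profinite completion is $\pi_1^{t,\ab}$, the assertion has two halves -- finite generation of the value and essential constancy of the pro-system -- which I would establish together by induction on $\dim X$.

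For the smooth case there is nothing to prove: smoothness implies geometric unibranchness, so $\pw=\pi_1^{t,\ab}$ and the statement is exactly Schmidt--Spie{\ss}. For $X$ normal and connected I would pass to the regular locus $U=X\setminus X_\sing$. As $X$ is normal, $X_\sing$ has codimension $\ge 2$, and $U$ is smooth and connected over the perfect field $\F_q$. Every connected finite étale cover of $X$ restricts to a connected cover of the dense open $U$, whence a surjection $\pi_1^{t,\ab}(U)_\rW\twoheadrightarrow\pi_1^{t,\ab}(X)_\rW$; tameness is unaffected since the extra boundary $X_\sing$ is of codimension $\ge 2$ and imposes no divisorial ramification condition. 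Because $\pw=\pi_1^{t,\ab}$ for the unibranch $X$, the normal case follows from the smooth case applied to $U$.

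For general reduced connected $X$ the heart of the argument is a Mayer--Vietoris sequence for $\pw(-)_\rW$ attached to the conductor square
\[
\begin{array}{ccc}
\tilde S & \hookrightarrow & \tilde X\\
\downarrow & & \downarrow\\
S & \hookrightarrow & X,
\end{array}
\]
where $\tilde X\to X$ is the normalization, $S\hookrightarrow X$ the closed subscheme cut out by the conductor and $\tilde S=\tilde X\times_X S$. This square is Cartesian and co-Cartesian, and from it I would extract an exact sequence of pro-groups
\[
\pw(\tilde S)_\rW\to\pw(\tilde X)_\rW\oplus\pw(S)_\rW\to\pw(X)_\rW\to\Gamma\to 0,\qquad \Gamma=\ker\bigl(H_0(\tilde S)\to H_0(\tilde X)\oplus H_0(S)\bigr),
\]
with $H_0(\,\cdot\,)=\Z^{\pi_0(\,\cdot\,)}$. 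The defect $\Gamma$ is a finitely generated free abelian group, the lattice of ``loops'' created by the gluing $\tilde S\twoheadrightarrow S$; it is precisely these honest, uncompleted copies of $\Z$, already visible for the node, that force the use of the enlarged $\pw$ in place of the profinite $\pi_1$. Since $\dim S,\dim\tilde S<\dim X$, the inductive hypothesis applied to each connected component makes the outer $\pw$-terms over $S$ and $\tilde S$ constant and finitely generated, while the components of $\tilde X$ are normal and are covered by the previous step. In particular $\pw(X)_\rW$ surjects onto $\Gamma$ with kernel a quotient of $\pw(\tilde X)_\rW\oplus\pw(S)_\rW$, so it is squeezed, as a pro-group, between constant finitely generated abelian groups.

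The main obstacle is the construction of this descent sequence for the enlarged tame fundamental group, together with the verification that the pro-structure stays constant throughout. Two points deserve the most care. First, unlike $\pi_1$, the enlarged $\pw$ must be shown to satisfy Mayer--Vietoris for co-Cartesian conductor squares with the \emph{integral} $H_0=\Z^{\pi_0}$ on the right; this is the formal incarnation of the fact that $\pw$ sees the free part $\Gamma$ that $\pi_1$ records only after profinite completion. Second, one must check that tameness and the Weil condition are compatible with the square -- that the tame, integral-Frobenius conditions on covers of $X$ match those pulled back to $\tilde X$ and pushed to $S$ -- so that the sequence is genuinely one of $(-)_\rW$-groups and the resulting extension of $\Gamma$ is again a \emph{constant} finitely generated group rather than a nontrivial pro-object.
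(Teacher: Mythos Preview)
Your treatment of the smooth and normal cases is essentially the same as the paper's: for $X$ normal one passes to a smooth dense open $U\subset X$, uses the surjection $\pi_1^{t}(U)\twoheadrightarrow\pi_1^{t}(X)$ (SGA1, V, Prop.~8), and invokes Schmidt--Spie{\ss} for $U$. (Your codimension-$\ge 2$ remark is not really the point; what matters is functoriality of curve-tameness together with the surjectivity just cited, and the fact that for geometrically unibranch $X$ one has $\pw(X)=\pi_1^{t,\ab}(X)$.)

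The genuine difference is the passage from normal to arbitrary $X$. The paper does \emph{not} set up a Mayer--Vietoris sequence for $\pw(-)_\rW$ and does \emph{not} induct on dimension. Instead it observes that the cokernel of
\[
\pw(\tilde X)_\rW\longrightarrow\pw(X)_\rW
\]
is canonically identified with the relative enlarged group $\Pi_1^{\ab}(\tilde X/X)$ (torsors on $X$ that trivialize over $\tilde X$; these are automatically tame since $\tilde X\to X$ is proper, and the comparison with the Weil version is immediate). The finite generation of $\Pi_1^{\ab}(\tilde X/X)$ is then quoted directly from SGA3, X~\S6. Combined with the normal case for the components of $\tilde X$, this finishes the argument in one step.

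Your conductor-square/Mayer--Vietoris route is not wrong; in fact the sequence you write down is exactly what one obtains by combining the paper's cohomological blow-up sequence (for finite $\pi$, valid with arbitrary constant coefficients) with the representability isomorphism $\Hom_{\pro}(\pw(-)_\rW,A)\cong H^1_{\rW,t}(-,A)$ and then dualizing via Yoneda. Your description of $\Gamma$ as $\ker\bigl(\Z^{\pi_0(\tilde S)}\to\Z^{\pi_0(\tilde X)}\oplus\Z^{\pi_0(S)}\bigr)$ is the correct cokernel, and it is visibly a finitely generated free abelian group. So the ``main obstacle'' you flag can actually be discharged with the tools already in the paper. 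The trade-off is that your approach is more self-contained (it makes the loop lattice $\Gamma$ explicit, which is pleasant for examples like the node) but requires the extra cohomological machinery plus an induction on $\dim X$; the paper's approach is shorter because it outsources precisely this combinatorial content to the SGA3 black box and needs no induction at all.
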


On the other hand, recall from \cite{geissersuslin} the definition of Weil-Suslin homology:
Let $\F$ be the finite base field, $F\in \Gal_\F$ be the Frobenius automorphism and $G\cong \Z$ the subgroup of $\Gal_\F$ generated by $F$.
For an abelian group $A$,
the groups $H_i^\WS(X,A)$ are defined as the homology of the cone of $1-F^*$ on the Suslin complex tensored by $A$ of the base change $\bar X$ of $X$ to the algebraic closure $\bar \F$ of $\F$.
By definition, there are   short exact sequences
\[
0\longrightarrow H_i^\rS(\bar X,A)_G \longrightarrow H_i^\WS(X,A)\longrightarrow H_{i-1}^\rS(\bar X,A)^G\longrightarrow 0.
\]
Furthermore, there are natural maps for all $i$
\[
H_i^\rS(X,A)\longrightarrow H_{i+1}^\WS(X,A).
\]
If $X$ is smooth and $A$ is finite, then it follows from the proof of Kato's conjecture by Jannsen, Kerz and Saito \cite{KS} (and under resolution of singularities) that these maps are isomorphisms
(for $i=0$ this follows from the theorem of Schmidt-Spie{\ss}).

\medskip
We define a refined reciprocity homomorphism
\[
\rec_X:  H_1^\WS(X,\Z) \longrightarrow \pw(X)_\rW
\]
such that the composite with the natural map $H_0^\rS(X,\Z)\to H_1^\WS(X,\Z)$ is the reciprocity map $r_X$ described above. Our main result, conjectured in \cite{geissersuslin}, is the following

\begin{theorem} \label{maintheorem} For any connected scheme $X$, separated and of finite type over a finite field\/ $\F$, the homomorphism
\[
\rec_X:  H_1^\WS(X,\Z) \longrightarrow \pw(X)_\rW
\]
is surjective. The kernel of\/ $\rec_X$ contains, and if resolution of singularities for schemes of dimension $\le \dim X+1$ holds over $\F$
is equal to, the maximal divisible subgroup of
$H_1^\WS(X,\Z)$.
\end{theorem}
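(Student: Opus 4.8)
The plan is to reduce both assertions to statements with finite coefficients, where the comparison between Suslin homology and étale cohomology, and the known class field theory for curves, become available. The containment of the maximal divisible subgroup in the kernel is immediate: by Theorem~\ref{fingen} the target $\pw(X)_\rW$ is a finitely generated abelian group, hence contains no nonzero divisible subgroup, so every divisible element of $H_1^\WS(X,\Z)$ is killed by $\rec_X$.

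For surjectivity I would first observe that, since the target is finitely generated, $\rec_X$ is surjective as soon as $\rec_X\otimes\Z/n$ is surjective for every $n$: the cokernel $C$ is then finitely generated and satisfies $C/nC=0$ for all $n$, hence is divisible and therefore trivial. Dualizing, surjectivity modulo $n$ amounts to the injectivity of the induced map
\[
\Hom(\pw(X)_\rW,\Z/n)\longrightarrow \Hom(H_1^\WS(X,\Z),\Z/n),
\]
i.e.\ to the statement that every nontrivial cyclic tame Weil cover of $X$ of degree dividing $n$ is detected by some Weil--Suslin $1$-cycle. A character on the left is a character of the profinite completion $\pi_1^{t,\ab}(X)$, and the composite $\chi\circ\rec_X$ is its evaluation on cycles. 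To show such a $\chi$ is nonzero I would pass to the geometric situation, use Suslin--Voevodsky rigidity to identify $H_\ast^\rS(\bar X,\Z/n)$ with étale (co)homology, and then choose a curve $C\hookrightarrow X$ through closed points at which $\chi$ is nontrivial, reducing to the classical unramified/tame reciprocity for curves, where the relevant pairing is nondegenerate.

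For the kernel I would show that, under resolution of singularities, $\rec_X$ becomes an isomorphism after dividing by the maximal divisible subgroup $D$; since surjectivity is already known and both sides are finitely generated modulo $D$, it suffices to compare $n$-torsion and cotorsion, that is, to prove $\rec_X\otimes\Z/n$ is injective modulo divisibles. Here I would exploit the comparison maps $H_i^\rS(X,\Z/n)\to H_{i+1}^\WS(X,\Z/n)$, which are isomorphisms for smooth $X$ by the theorem of Jannsen--Kerz--Saito, and transport this to the singular $X$ by choosing a resolution $\tilde X\to X$ and descending along the resulting abstract blow-up (cdh) square; resolution of singularities in dimension $\le\dim X+1$ is precisely what is needed to resolve $X$ together with the auxiliary schemes (the center and its preimage) entering this square and to apply Kato's conjecture in the required range.

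The main obstacle is the finite-coefficient kernel computation of the previous paragraph: one must match the integral enlarged tame fundamental group with Weil--Suslin homology exactly at the non-geometrically-unibranch points, where $\pw$ and $\pi_1^{t,\ab}$ genuinely differ and where the singular Suslin complex is most delicate. Establishing the étale-cohomological interpretation of $H_\ast^\WS(X,\Z/n)$ for singular $X$, compatibly with the reciprocity pairing and with cdh-descent, is the technical heart of the argument, and is where the resolution-of-singularities hypothesis cannot be avoided.
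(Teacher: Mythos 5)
Your overall skeleton --- reduce everything to finite coefficients using that $\pw(X)_\rW$ is finitely generated, so that a finitely generated divisible cokernel is trivial and the kernel is controlled by the mod-$n$ maps --- matches the paper's, as does the observation that the maximal divisible subgroup dies in a finitely generated target. The gap is in how you propose to prove mod-$n$ surjectivity, i.e.\ that no nontrivial character $\chi$ of $\pw(X)_\rW$ annihilates the image of $H_1^\WS(X,\Z)$. You want to find closed points at which $\chi$ is nontrivial and run the argument on a curve through them; but for non-normal $X$ there exist nontrivial characters of $\pw(X)_\rW$ vanishing on \emph{every} Frobenius $\Frob_x$. The nodal curve $C$ of the last section is the basic example: the image of $\CH_0(C)=H_0^\rS(C,\Z)$ in $\Pi_1^\ab(C)_\rW$ has cokernel $H_1(\Gamma,\Z)\cong\Z$, so a character factoring through that quotient kills all Frobenii of closed points and is detected only by classes coming from $H_1^\rS(\bar C,\Z)^G$ (loops in the dual graph), not by zero-cycles. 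This is precisely why $r_X$ fails to be surjective for singular $X$ and why the theorem is stated for $H_1^\WS$ rather than $H_0^\rS$. The paper's proof therefore does \emph{not} argue pointwise on $X$ itself: it first establishes compatible abstract blow-up exact sequences for $H_\bullet^\WS(-,\Z/m)$ and for tame (Weil-)etale cohomology (Propositions~\ref{blowupseq}, \ref{blowcohom}, \ref{anticommuting}) and uses them, by induction on dimension along the normalization, to reduce mod-$m$ surjectivity to \emph{normal} connected $X$, where Chebotarev--Lang density combined with Proposition~\ref{pointcase} does apply. You invoke descent along a blow-up square only for the kernel computation; it is equally indispensable for surjectivity.

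A second, smaller issue: for injectivity mod $m$ (hence the identification of the kernel with the divisible part) the paper does not transport the Jannsen--Kerz--Saito comparison by cdh-descent. Having proved surjectivity of $\Phi^1_X$, it concludes bijectivity by comparing \emph{orders} of source and target via the duality Theorem~\ref{duall} (Galois--Suslin duality for $m$ prime to $p$, and the $\Z^c$-duality --- where resolution of singularities is used to pass to a smooth proper model --- for $p$-power $m$). Your route is plausible in outline but is exactly the part you defer as ``the technical heart,'' so as written it is not a proof. Finally, mod-$m$ injectivity for all $m$ only identifies $\ker(\rec_X)$ with the set of divisible elements; one still needs Lemma~\ref{divlemma} to see that this set is itself a divisible subgroup, i.e.\ the maximal divisible subgroup.
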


As a corollary, we obtain (under resolution of singularities)
an isomorphism of profinite completions
\[
\rec_X^\wedge:  H_1^\WS(X,\Z)^\wedge \stackrel{\sim}{\longrightarrow} \pi_1^{t,\ab}(X).
\]
Under Parshin's conjecture (cf.\ \cite{geissersuslin}), $H_1^\WS(X,\Z)$ is finitely
generated, hence $\rec_X$ would be an isomorphism.

\section{The fundamental group and tame coverings}
\subsection{Etale and Weil-etale cohomology}

Let $X$ be a scheme, separated and of finite type over a finite field $\F$.  The absolute Galois group $\Gal_\F$
acts on $\bar X=X\times_\F \bar \F$ via its action on $\bar  \F$. Pulling an etale sheaf $\f$
on $X$ back to $\bar \f$ on $\bar X$, we obtain a sheaf with a $\Gal_\F$-action, cf.\ \cite[XIII \S 1.1]{sga7}.
Using this action, the etale cohomology $H^*_\et(X,\f)$ can be calculated
as the cohomology of $R\Gamma_{\Gal_\F} R\Gamma_\et(\bar X, \bar \f)$.
The Weil-etale cohomology of $\f$ is by definition the cohomology of
$R\Gamma_G\, R\Gamma_\et(\bar X, \bar \f)$, where $G\cong \Z$ is the subgroup
of $\Gal_\F$ generated by the Frobenius.
Since $R\Gamma_{\Gal_\F}$ and $R\Gamma_G$ coincide on discrete torsion modules, etale and Weil-etale cohomology coincide on torsion sheaves (cf.\ \cite[\S 2]{tho-weil} for a more detailed account).

\medskip
We can calculate the Weil-etale cohomology of a sheaf $\f$ as follows:
Choose an injective resolution $\f\to \I^\bullet$. Then
\[ H^i_\et(X,\f) = H^i( \I^\bullet(\bar X)^G),\]
and
\[H^i_\rW(X,A) =
H^i\big(  \I^\bullet(\bar X) \stackrel{1-F^*}{\longrightarrow}\I^\bullet(\bar X)\big),\]
where $F^*$ is the pull-back along the Frobenius, and the left complex is considered to be in homological degree $0$. We form the double complex
by using the negative of the differential in the second
complex, i.e., the differential of the total complex has the form
\begin{equation}\label{cohdiff}
\I^i(\bar X) \oplus \I^{i-1}(\bar X) \to \I^{i+1}(\bar X) \oplus \I^{i}(\bar X),\ (\alpha,\beta)\mapsto (d\alpha, \alpha-F^*\alpha-d\beta).
\end{equation}
From the definition, we obtain short exact sequences
\begin{equation}\label{hsss}
 0\longrightarrow H^{i-1}_\et(\bar X,\f)_G \longrightarrow H^i_\rW(X,\f) \longrightarrow  H^i_\et(\bar X,\f)^G\longrightarrow 0,
\end{equation}
as well as a homomorphism
\begin{equation} \label{injfor1}
H^i_\et(X,\f) \longrightarrow H^i_\rW(X,\f)
\end{equation}
for each $i\ge 0$, which is an isomorphism for $i=0$ and injective for $i=1$.

\bigskip
By \cite[Thm.\,10.2]{susvoe}, etale and qfh-cohomology of a constant sheaf $A$ coincide. Hence, in order to calculate Weil-etale cohomology of $A$, we can also work with an injective resolution $A\to \I^\bullet$ in the big qfh-site over $\F$. If moreover $A$ is a $\Z/m$-module for some $m\ge1 $, then we can also work with a resolution of $A$ by injective $h$-sheaves of $\Z/m$-modules, see \cite[Thm.\,10.7]{susvoe}.

For a regular connected curve $C$ over  a field $k$ we consider the subgroup $H^1_t(C,A)\allowbreak\subseteq H^1_\et(C,A)$ of tame cohomology classes (corresponding to those continuous homomorphisms $\pi_1^\et(C)\to A$ which factor through the tame fundamental group $\pi_1^t(C',  C' -C)$, where $C'$ is the regular compactification of $C$).

For a general $k$-scheme $X$, we call a cohomology class in $a\in H^1_\et(X,A)$ tame  if for any morphism $f:C\to X$ with $C$ a regular curve, we have $f^*(a)\in H^1_t(C,A)$. The tame cohomology classes form a subgroup
\[
H^1_t(X,A)\subseteq  H^1_\et(X,A).
\]
The groups coincide if $X$ is proper, if $p=0$, or if $p>0$ and
$A$ is $p$-torsion free, where $p$ is the characteristic of the base field $k$.
If $X$ is smooth with smooth
compactification $X'$, then $H^1_t(X,\Z/p^r) \cong H^1_\et(X', \Z/p^r)$ for any $r\ge 1$
by \cite[Prop.\,2.10]{tho-alex}.

For $X$ separated and of finite type over the finite field $\F$, we define the tame Weil-etale cohomology to be the subgroup
\[ H^1_{\rW,t}(X,A) \subseteq H^1_\rW(X, A)\]
of those elements, whose image in
$H^1_\et(\bar X,A)$ in \eqref{hsss} is tame.

\medskip
Recall that an abstract blow-up square is a cartesian diagram
\begin{equation}\label{blowupdef}
\begin{tikzcd}
Z'\rar{i'}\dar[swap]{\pi'}&X'\dar{\pi}\\
Z\rar{i}& X
\end{tikzcd}
\end{equation}
such that $i$ is a closed embedding, $\pi$ is proper, and
$\pi$ induces an isomorphism $(X'-Z')_\red \stackrel{\sim}{\to} (X-Z)_\red$.

\begin{proposition} \label{blowcohom}
If in the abstract blow-up square \eqref{blowupdef}
 $\pi$ is finite, or if the abelian group $A$ is torsion, then there is an exact sequence
\begin{multline*}
0 \longrightarrow H^0_\rW(X,A) \longrightarrow H^0_\rW(X',A) \oplus H^0_\rW(Z,A) \longrightarrow
H^0_\rW(Z',A)\\
\longrightarrow H^1_{\rW,t}(X,A)
\longrightarrow H^1_{\rW,t}(X',A) \oplus H^1_{\rW,t}(Z,A)
\longrightarrow H^1_{\rW,t}(Z',A).
\end{multline*}
\end{proposition}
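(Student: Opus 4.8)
The plan is to exhibit the six-term sequence as the low-degree part of a Mayer--Vietoris long exact sequence attached to a homotopy cartesian square of complexes, and only at the very end to cut the degree-one terms down to their tame subgroups by hand. The square will first be produced over $\bar\F$ at the level of étale sheaves, and the Weil-étale statement will be obtained by applying the cone-of-$(1-F^*)$ functor.

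First I would work on $\bar X$. Write $\bar U=\bar X-\bar Z$ with open immersion $j\colon \bar U\hookrightarrow\bar X$, and similarly $j'\colon \bar U'=\bar X'-\bar Z'\hookrightarrow\bar X'$; the square \eqref{blowupdef} identifies $\bar U'\cong\bar U$. The extension-by-zero sequences exhibit $j_!A_{\bar U}$ as the fibre of $A_{\bar X}\to\bar i_*A_{\bar Z}$, and $j'_!A_{\bar U'}$ as the fibre of $A_{\bar X'}\to\bar i'_*A_{\bar Z'}$. The heart of this step is the identity $R\bar\pi_*\,j'_!A_{\bar U'}\cong j_!A_{\bar U}$, which I would check on geometric stalks: over $\bar U$ the map $\bar\pi$ is an isomorphism, while over a point of $\bar Z$ the fibre lies in $\bar Z'=\bar\pi^{-1}(\bar Z)$, on which $j'_!A_{\bar U'}$ vanishes, so the stalk is zero. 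This is exactly where the hypotheses enter: for torsion $A$ the stalk computation is the proper base change theorem, and for finite $\pi$ it is the exactness of $\bar\pi_*$ together with base change for finite morphisms. Hence the square of sheaves on $\bar X$ with corners $A_{\bar X}$, $R\bar\pi_*A_{\bar X'}$, $\bar i_*A_{\bar Z}$, $R\bar\pi_*\bar i'_*A_{\bar Z'}$ has identified fibres, so it is homotopy cartesian; applying $R\Gamma(\bar X,-)$ shows that $R\Gamma_\et(\bar X,A)\to R\Gamma_\et(\bar X',A)\oplus R\Gamma_\et(\bar Z,A)\to R\Gamma_\et(\bar Z',A)$ is a homotopy fibre sequence.

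All four complexes carry compatible $G$-actions, and the mapping-fibre functor $R\Gamma_G=\mathrm{Tot}\,[\,(-)\xrightarrow{1-F^*}(-)\,]$ is triangulated, hence carries homotopy cartesian squares to homotopy cartesian squares. Applying it produces a long exact Weil-étale Mayer--Vietoris sequence; since $H^{-1}_\rW(\bar Z',A)=0$, its low-degree part is precisely the asserted six-term sequence, except that the full groups $H^1_\rW$ appear in place of $H^1_{\rW,t}$.

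It remains to restrict the three degree-one terms to their tame subgroups and to see that exactness survives, and I expect this to be the main obstacle. Functoriality of the tame condition shows that the two restriction maps preserve tameness, so everything is concentrated in the following Lifting Lemma: \emph{if $w\in H^1_\rW(X,A)$ has tame image on $\bar X'$ and on $\bar Z$, then $w$ is tame.} To prove it, let $C\to\bar X$ be a regular curve; if it factors through $\bar Z$ then tameness on $\bar Z$ suffices, and otherwise its generic point lands in $\bar U$, so by properness of $\bar\pi$ and the valuative criterion the map lifts to some $g\colon C\to\bar X'$ over $\bar X$, whence $w|_C=g^*(w|_{\bar X'})$ is tame. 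Granting the lemma, exactness of the tame sequence is formal. The image of the boundary map is the kernel of $H^1_\rW(X)\to H^1_\rW(X')\oplus H^1_\rW(Z)$, whose elements restrict to $0$ on $\bar X'$ and $\bar Z$ and are therefore tame by the lemma, so the boundary factors through $H^1_{\rW,t}(X)$, giving exactness at $H^1_{\rW,t}(X)$ after intersecting the full sequence with the tame subgroup. For exactness at $H^1_{\rW,t}(X')\oplus H^1_{\rW,t}(Z)$, any element $\alpha(w)$ of the image that is tame in both components forces $w$ to be tame by the lemma, so that $\mathrm{im}(\alpha)\cap(H^1_{\rW,t}\oplus H^1_{\rW,t})$ already lies in $\alpha\big(H^1_{\rW,t}(X)\big)$; combined with exactness of the full sequence this is the desired exactness, with no surjectivity onto $H^1_{\rW,t}(Z')$ claimed.
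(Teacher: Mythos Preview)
Your argument is correct, and the overall architecture---produce a Mayer--Vietoris triangle over $\bar\F$, apply $R\Gamma_G$, then restrict to tame subgroups---matches the paper's. The implementations differ in two places.

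For the exact triangle, the paper works in the $h$-topology (or qfh-topology when $\pi$ is finite): it uses the short exact sequence of representable $h$-sheaves
\[
0\to \Z_h(Z')\to \Z_h(Z)\oplus\Z_h(X')\to \Z_h(X)\to 0,
\]
applies $R\Hom(-,A)$, and invokes the Suslin--Voevodsky comparison of $h$/qfh-cohomology with \'etale cohomology. Your route via $R\bar\pi_*\,j'_!A\cong j_!A$ and proper (resp.\ finite) base change is the classical SGA4 argument for the same triangle; it avoids the $h$-machinery but requires checking the stalk computation by hand, which you do correctly.

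For the tame restriction, the paper does not prove your Lifting Lemma but cites \cite[Prop.\,5.1]{tho-alex}, which supplies the tame six-term sequence over the algebraically closed field $\bar\F$ directly; the passage to Weil-\'etale groups is then a diagram chase between that sequence, the full \'etale sequence over $\bar\F$, and the Weil-\'etale sequence. Your Lifting Lemma (lift the test curve through the proper map $\bar\pi$ via the valuative criterion, or factor through $\bar Z$) is essentially the content of that cited proposition, and your formal deduction of exactness at the two tame $H^1$-terms is the same diagram chase made explicit. So your proof is more self-contained, at the cost of reproving a result the paper imports.
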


\begin{proof}  Let $\Sch/\F$ be the category of separated schemes of finite type over $\F$.
For $S\in \Sch/\F$ we denote by $\Z_h(S)$ the $h$-sheaf of abelian groups associated with the presheaf defined by $U \mapsto \Z[\mathrm{Mor}_\F(U,S)]$.

For any finite field extension $\F'/\F$, the base changes of $X$, $Z$, $Z'$, $X'$ to $\F'$ form an abstract blow-up square in a natural way.  By the same argument as in the proof of \cite[Prop.\,3.2]{ge-duke} (for the eh-topology) or \cite[Lem.\,12.1]{blowup} (for the cdh-topology), we have an exact sequence of $h$-sheaves on $\Sch/\F$
\begin{equation}\label{htria}
0\to \Z_h(Z'_{\F'}) \to \Z_h(Z_{\F'}) \oplus \Z_h(X'_{\F'}) \to \Z_h(X_{\F'}) \to 0.
\end{equation}
If $A$ is torsion, \'{e}tale and $h$-cohomology with values in $A$ agree by \cite[Thm.\,10.2]{susvoe}. Applying the functor $R\Hom_h(-,A)$ and passing to the limit over all $\F'/\F$,  we obtain  the exact triangle
\begin{equation} \label{eqtriang}
 R\Gamma(\bar X_\et, A) \to R\Gamma(\bar X'_\et,A) \oplus R\Gamma(\bar Z_\et, A) \to R\Gamma(\bar Z'_\et, A)\to  R\Gamma(\bar X_\et, A)[1]
\end{equation}
and the long exact sequence
\begin{equation} \label{eqoben}
\to H^i_\et(\bar X,A) \to H^i_\et(\bar X',A) \oplus H^i_\et(\bar Z,A) \to H^i_\et(\bar Z',A) \to  H^{i+1}_\et(\bar X,A) \to \ .  \end{equation}
If $\pi$ is finite, we have the qfh-version of the exact sequence \eqref{htria} and obtain \eqref{eqtriang} and \eqref{eqoben} for arbitrary $A$, since etale and qfh-cohomology with values in any abelian group agree by \cite[Thm.\,10.7]{susvoe}.
Applying $R\Gamma_G$ to (\ref{eqtriang}) and taking cohomology, we obtain the long exact sequence
\begin{equation}\label{eqweil}
H^i_\rW(X,A) \to H^i_\rW(X',A) \oplus H^i_\rW(Z,A) \to H^i_\rW(Z',A) \to  H^{i+1}_\rW(X,A) \to  \cdots .
\end{equation}
By \cite[Prop.\,5.1]{tho-alex}, (\ref{eqoben}) induces an exact sequence
\begin{multline}\label{tamelong}
0 \longrightarrow H^0_\et(\bar X,A) \longrightarrow H^0_\et(\bar X',A) \oplus H^0_\et(\bar Z,A) \longrightarrow H^0_\et(\bar Z',A)\\
\stackrel{\delta}{\longrightarrow} H^1_t(\bar X,A) \longrightarrow H^1_t(\bar X',A) \oplus H^1_t(\bar Z,A) \longrightarrow H^1_t(\bar Z',A).
\end{multline}
Comparing \eqref{tamelong} with the sequences (\ref{eqoben}) and (\ref{eqweil}),  we obtain the statement of the proposition by a diagram chase.
\end{proof}

\subsection{The enlarged fundamental group}
We recall the definition of the enlarged fundamental group
of \cite[X \S 6]{sga3}: Let $X$ be a connected, locally noetherian scheme.
For a group $G$ (considered as a constant group scheme over $X$), a $G$-torsor $P$ over $X$
is a non-empty etale $X$-scheme $P$ (i.e., $\pi: P\to X$ is unramified, flat and locally of finite type) with
a $G$-action $ P\times_X G \to P$ such that   $P\times_X G\to P\times_XP$,
$(x,g)\mapsto (x,xg)$, is an isomorphism.
By \cite[Prop.\,2.7]{milnebook} (see also Ex.\ 2.6, loc.\ cit.), for any etale sheaf $F$ on $X$ we have a Hochschild-Serre spectral sequence
\[
E_2^{rs}= H^r(G, H^s_\et(P,\pi^*F)) \Longrightarrow H^{r+s}_\et(X,F).
\]
For a geometric point $\xi$ of $X$, one defines $\Pi^1(X,\xi,G)$ to
be the set of isomorphism classes of $G$-torsors over $X $ pointed over $\xi$. The trivial $G$-torsor gives a distinguished element in $\Pi^1(X,\xi,G)$.
For a $G$-torsor $P$ on $X$ and a group homomorphism $ f:G\to H $, consider
\[
f_*(P):= (P\times H)/G,
\]
where $G$ acts by $(t,h)\cdot g= (tg,f(g^{-1})h)$. Then $f_*(P)$ is an $H$-torsor over $X$ and we obtain a functor
\[ G \longmapsto \Pi^1(X,\xi,G)\]
from groups to pointed sets.
By \cite[X \S 6]{sga3},
this functor is pro-represented by the enlarged fundamental pro-group
$\Pi_1(X,\xi)$, i.e.,
\[
\Pi^1(X,\xi,G)\cong \Hom_\pro(\Pi_1(X,\xi), G).
\]
Explicitly, there is a pro-system of groups
$\Pi_1(X,\xi)=(G_i)_{i\in I}$ with $I$ filtering,
and a $G_i$-torsor $P_i$ corresponding to the projection
map $\Pi_1(X,\xi)\to G_i $ for all $i$ such that for any transition map
$\alpha_{ij}:G_i\to G_j$ in the system we have $P_j=(\alpha_{ij})_*(P_i)$,
and such that for any morphism
$\Pi_1(X,\xi)\to H$ represented by $f:G_j\to H$ in the filtered
colimit, the corresponding $H$-torsor is
$f_*(P_j)$.

\medskip
Next we define the enlarged tame fundamental group by extending the notion of curve-tameness from \cite{kerzschmidt} to the enlarged context.
A $G$-torsor $P$ over a regular connected curve $C$ over a field $k$ is called tame, if the projection $P\to C$ extends to an at most tamely ramified covering of the regular compactification $C'$ of $C$. If $X$ is any scheme, separated and of finite type over $k$, and $P$ is a $G$-torsor on $X$, then we call $P$ tame if its pull back to the normalization of any curve on $X$ is a tame torsor. If $G=A$ is abelian, then an $A$-torsor is tame if and only if its associated class in $H^1_\et(X,A)$ lies in $H^1_t(X,A)$.

\medskip
The functor
\[
G \longmapsto \Pi^{1,t}(X,\xi,G)
\]
sending $G$ to the set of isomorphism classes of pointed tame $G$-torsors on $X$
is pro-represented by the enlarged tame fundamental group $\Pi_1^{t}(X,\xi)$, a quotient of $\Pi_1(X,\xi)$ in the category of pro-groups.

The abelianizations  $\Pi_1^{\ab}(X)$ and $\Pi_1^{t,\ab}(X)$ of $\Pi_1(X,\xi)$ and $\Pi_1^t(X,\xi)$ represent the restrictions of the respective functors to the category of abelian groups and are independent of the chosen base point $\xi$.

\begin{lemma}\label{proab}
For any abelian group $A$ we have
\[ \Hom_\pro(\Pi_1^{\ab}(X),A)\cong H^1_\et(X,A)\]
and similarly for the tame version.
\end{lemma}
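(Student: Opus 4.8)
The plan is to reduce both sides to the set of pointed $A$-torsors on $X$ and then to invoke the classical torsor--cohomology dictionary. Since, by construction, $\Pi_1^{\ab}(X)$ pro-represents the restriction to abelian groups of the torsor functor, we already have a natural identification $\Hom_\pro(\Pi_1^{\ab}(X),A)\cong \Pi^1(X,\xi,A)$, the group of isomorphism classes of $A$-torsors on $X$ pointed over $\xi$. It therefore suffices to produce a natural isomorphism $\Pi^1(X,\xi,A)\cong H^1_\et(X,A)$, and the tame case will then follow by restriction.

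For the unpointed statement, I would observe that an $A$-torsor in the sense of the excerpt (an \'etale $X$-scheme with a simply transitive $A$-action) is exactly a torsor under the constant group scheme $A$ in the \'etale topos. The standard correspondence between such torsors and first cohomology (cf.\ \cite{milnebook}) identifies the set of isomorphism classes of $A$-torsors, after forgetting the pointing, with $H^1_\et(X,A)$ for the constant sheaf $A$. The point to stress is that $A$ here is an arbitrary abstract abelian group and the covers are not assumed finite; but representability of the torsor is built into the definition, so no finiteness hypothesis enters and the classical argument applies verbatim.

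It remains to see that the pointing is harmless. Since $X$ is connected, the $A$-equivariant automorphisms of a fixed $A$-torsor $P$ are given by translations by global sections, i.e.\ by $H^0_\et(X,A)=A$, and this group acts simply transitively on the fiber $P_\xi$, which is a nonempty torsor under the abstract group $A$. Hence every torsor admits a pointing, and any two pointings of $P$ are carried into one another by a unique automorphism; consequently forgetting the pointing is a bijection from $\Pi^1(X,\xi,A)$ onto the set of isomorphism classes of $A$-torsors. Composing, we obtain the desired isomorphism, which one checks is a homomorphism of groups (the contracted product of torsors corresponding to addition in $H^1$) and natural in $A$ (the pushout $f_*$ of a torsor along $f\colon A\to A'$ corresponding to the coefficient map $H^1_\et(X,A)\to H^1_\et(X,A')$).

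For the tame version I would argue identically, using the criterion recalled just before the lemma: an abelian torsor is tame precisely when its class lies in $H^1_t(X,A)\subseteq H^1_\et(X,A)$. Thus the isomorphism of the previous paragraph restricts to $\Pi^{1,t}(X,\xi,A)\cong H^1_t(X,A)$, and combined with the pro-representability of $\Pi_1^{t,\ab}(X)$ this yields $\Hom_\pro(\Pi_1^{t,\ab}(X),A)\cong H^1_t(X,A)$. The only genuinely delicate point in the whole argument is the torsor--cohomology dictionary for non-finite abelian $A$ together with the enlarged (non-finite) \'etale torsors; once one grants that the defining properties of an enlarged $A$-torsor are exactly those of an \'etale torsor under the constant sheaf $A$, everything reduces to the classical, finiteness-free statements.
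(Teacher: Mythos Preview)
Your argument is correct, but it follows a different route from the paper's. The paper does not invoke the torsor--cohomology dictionary directly; instead it writes $\Pi_1^{\ab}(X)=(A_i)$ and, for each index $i$, uses the Hochschild--Serre spectral sequence for the $A_i$-torsor $P_i\to X$ to obtain
\[
0\longrightarrow H^1(A_i,A)\longrightarrow H^1_\et(X,A)\longrightarrow H^0(A_i,H^1_\et(P_i,A)).
\]
Passing to the colimit over $i$, the right-hand map vanishes because every $A$-torsor on $X$ trivializes over some $P_i$, while the left-hand term becomes $\colim_i\Hom(A_i,A)=\Hom_\pro(\Pi_1^{\ab}(X),A)$.

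Your approach bypasses this by using pro-representability to identify $\Hom_\pro(\Pi_1^{\ab}(X),A)$ with pointed $A$-torsors, then arguing that connectedness of $X$ makes the pointing irrelevant, and finally quoting the standard bijection between (unpointed) $A$-torsors and $H^1_\et(X,A)$. This is more elementary and entirely self-contained; the only input is the finiteness-free torsor--cohomology correspondence, and your observation that the enlarged $A$-torsors of \cite{sga3} are precisely the \'etale torsors under the constant sheaf $A$. The paper's spectral-sequence approach is slightly heavier but has the advantage of being reused, essentially verbatim, in the proof of Proposition~\ref{homvergleich}, where one compares group cohomology of $\Pi_1(X)_\rW$ with Weil-\'etale cohomology via parallel exact sequences; your method does not set up that parallel structure, so you would need a separate argument there.
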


\begin{proof}
Write $\Pi_1^\ab(X)=(A_i)$ and $P_i$ for the $A_i$-torsor
corresponding to the projection
$\Pi_1^\ab(X)\to A_i$.
We obtain a filtered direct system of
Hochschild-Serre spectral sequences
\[
H^r(A_i,H^s_\et(P_i,A))\Rightarrow H^{r+s}_\et(X,A)
\]
inducing a system of short exact sequences
\[
0\longrightarrow H^1(A_i,A)\longrightarrow H^1_\et(X,A)\longrightarrow
H^0(A_i,H^1_\et(P_i,A)).
\]
The right map is the zero map in the colimit over all $i$, because
if the $A$-torsor $P$ arises from a map $f:A_i\to A$,
i.e., $P=f_*(P_i)$, then $P$ trivializes over $P_i$.
Finally,
\[
\colim H^1(A_i,A)\cong \colim \Hom(A_i,A)=
\Hom_\pro(\Pi_1^{\ab}(X),A). \qedhere
\]
\end{proof}

From now on let $X$ be a connected scheme, separated and of finite type over the finite field $\F$. As above, we denote the subgroup of $\Gal_\F$ consisting of integral powers of the Frobenius automorphism by $G$.

\begin{definition}
The enlarged Weil-tame fundamental group $\Pi_1^t(X,\xi)_\rW$ is defined by the cartesian diagram  of pro-groups
\[
\begin{tikzcd}
\Pi_1^t(X,\xi)_\rW \rar\dar &\Pi_1^t(X,\xi)\dar\\
G \rar & \Pi_1(\F).
\end{tikzcd}
\]
\end{definition}

The abelianization $\Pi_1^{t,\ab}(X)_\rW$ of $\Pi_1^{t}(X,\xi)_\rW$ fits into the analogous
cartesian diagram
\[
\begin{tikzcd}
\Pi_1^{t,\ab}(X)_\rW \rar\dar &\Pi_1^{t,\ab}(X)\dar\\
G \rar & \Pi_1(\F).
\end{tikzcd}
\]
The profinite completion of $\pw(X)_\rW$  is the usual
abelian (curve-)tame fundamental group $\pi_1^{t,ab}(X)$ of  \cite{kerzschmidt}.

\begin{proposition}\label{homvergleich}
For any abelian group $A$, there is a functorial isomorphism
\[
\Hom_\pro(\Pi_1^{\ab}(X)_\rW,A)\cong H^1_{\rW}(X,A)
\]
compatible with the isomorphism of Lemma \ref{proab}, and
there is a similar isomorphism for the tame version.
\end{proposition}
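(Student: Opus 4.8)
The plan is to realize both sides as the middle terms of short exact sequences with identical outer terms, to construct a natural morphism between these sequences, and to conclude by the five lemma. On the cohomological side the sequence is already at hand: it is the case $i=1$ of \eqref{hsss},
\[
0 \longrightarrow H^0_\et(\bar X, A)_G \longrightarrow H^1_\rW(X,A) \longrightarrow H^1_\et(\bar X,A)^G \longrightarrow 0.
\]
On the side of the fundamental group I would first produce the analogous sequence. The homotopy exact sequence for the enlarged fundamental group, combined with the pullback defining the Weil version, gives an extension of pro-groups
\[
1 \longrightarrow \Pi_1(\bar X,\bar\xi) \longrightarrow \Pi_1(X,\xi)_\rW \longrightarrow G \longrightarrow 1 ,
\]
and since $G\cong\Z$ has $H_2(\Z)=0$, the five-term sequence identifies the abelianization with a short exact sequence
\[
0 \longrightarrow \Pi_1^{\ab}(\bar X)_G \longrightarrow \Pi_1^{\ab}(X)_\rW \longrightarrow G \longrightarrow 0 .
\]

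Next I would apply the left exact functor $\Hom_\pro(-,A)$. Because $G\cong\Z$ is free, the extension splits (a splitting is a choice of Frobenius lift), so I obtain a short exact sequence
\[
0 \longrightarrow \Hom_\pro(G,A) \longrightarrow \Hom_\pro(\Pi_1^{\ab}(X)_\rW, A) \longrightarrow \Hom_\pro(\Pi_1^{\ab}(\bar X)_G, A) \longrightarrow 0 .
\]
Here $\Hom_\pro(G,A)=\Hom(\Z,A)=A$, while $\Hom_\pro(\Pi_1^{\ab}(\bar X)_G,A)=\Hom_\pro(\Pi_1^{\ab}(\bar X),A)^G=H^1_\et(\bar X,A)^G$ by the $G$-equivariant form of Lemma \ref{proab} applied to $\bar X$. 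Thus the outer terms of the two sequences agree: $H^1_\et(\bar X,A)^G$ on the right, and $A\cong H^0_\et(\bar X,A)_G$ on the left, the latter because connectedness of $X$ forces $G$ to act transitively on $\pi_0(\bar X)$, so the $G$-coinvariants of $H^0_\et(\bar X,A)=\mathrm{Map}(\pi_0(\bar X),A)$ are canonically $A$.

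It then remains to construct a morphism between the two sequences inducing the identity on $H^1_\et(\bar X,A)^G$ and the above isomorphism on the left, and to verify that it is natural in $A$ and compatible with Lemma \ref{proab}. I would build this map at the level of the explicit cocycle description \eqref{cohdiff}: a homomorphism $\Pi_1^{\ab}(X)_\rW\to A$ restricts on $\bar X$ to a $G$-invariant class $\alpha\in H^1_\et(\bar X,A)^G$ via Lemma \ref{proab}, while the Weil structure supplies a trivialization $\beta$ with $\alpha-F^*\alpha=d\beta$, so that $(\alpha,\beta)$ is a Weil-\'etale $1$-cocycle. Naturality in $A$, and the commutativity of the square comparing this isomorphism with that of Lemma \ref{proab} through the maps $\Pi_1^{\ab}(X)_\rW\to\Pi_1^{\ab}(X)$ and \eqref{injfor1}, then follow from the manifestly compatible construction. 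The tame version is entirely parallel: one restricts \eqref{hsss} to the $G$-stable subgroup of tame classes, replaces $\Pi_1^{\ab}$ by $\Pi_1^{t,\ab}$ and $H^1_\et(\bar X,A)$ by $H^1_t(\bar X,A)$, and invokes the tame case of Lemma \ref{proab}.

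The main obstacle I anticipate is exactly this last step: making the comparison map honestly compatible on the left-hand terms, i.e.\ matching the abstract splitting datum $\Hom(\Z,A)=A$ with the coinvariants $H^0_\et(\bar X,A)_G$, and checking that the resulting square with Lemma \ref{proab} and \eqref{injfor1} commutes. This forces one to pass between the torsor-theoretic and the cocycle-theoretic descriptions and to control the pro-group subtleties (exactness of $\Hom_\pro$, and realizing the splitting as an inverse limit of nonempty coset systems). A secondary bookkeeping point is the case of $X$ connected but not geometrically connected, where the image of $\Pi_1^{\ab}(X)_\rW\to G$ is a finite-index subgroup and one passes to the constant field; the identification $H^0_\et(\bar X,A)_G\cong A$ nonetheless persists by transitivity of the $G$-action on $\pi_0(\bar X)$.
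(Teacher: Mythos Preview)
Your strategy coincides with the paper's: reduce to comparing two short exact sequences with outer terms $H^0_\et(\bar X,A)_G$ and $H^1_\et(\bar X,A)^G$ (resp.\ their tame variants), identify the right-hand terms via Lemma~\ref{proab}, and conclude. The paper also begins by reducing to $X$ geometrically connected, which you defer to the end; doing it first is cleaner since the homotopy sequence you write down only holds verbatim in that case.

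The one point of genuine difference is how the comparison map is built. You propose to extract it by hand from the cocycle model \eqref{cohdiff}, choosing a Frobenius lift to split the sequence and then reading off $(\alpha,\beta)$; you rightly flag that matching the splitting datum with $H^0_\et(\bar X,A)_G$ and checking compatibility with Lemma~\ref{proab} and \eqref{injfor1} is the delicate step. The paper avoids this entirely by constructing the map at the derived level: writing $\Pi_1(X,\xi)_\rW=(G_i)$ with $\bar G_i=\ker(G_i\to G)$ and $\bar P_i$ the associated $\bar G_i$-torsor on $\bar X$, the edge map $A\to R\Gamma(\bar P_i,A)$ induces $R\Gamma_{G_i}(A)\to R\Gamma_G R\Gamma_{\bar G_i}R\Gamma(\bar P_i,A)=R\Gamma_G R\Gamma(\bar X,A)$, and taking $H^1$ and the colimit over $i$ gives the map directly as a morphism of Hochschild--Serre-type spectral sequences. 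This packaging makes the commutativity of the left-hand square and the compatibility with Lemma~\ref{proab} automatic, since both arise as edge maps of the same spectral-sequence comparison, and it sidesteps the pro-group splitting argument altogether. Your approach is more elementary but requires exactly the bookkeeping you anticipate; the paper's buys that bookkeeping for free at the cost of a little derived-category formalism.
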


\begin{proof}
Replacing $\F$ by its maximal algebraic extension in $\Gamma(X,\O_X)$ changes $G$ to a subgroup of finite index, but does not change the groups on both sides of the statement. Hence we may assume that $X$ is geometrically connected \cite[I, \S4, 6.7]{dg}.
Setting $\bar X=X\times_\F \bar \F$, we have the exact sequence of pro-groups
\[
1 \longrightarrow \Pi_1(\bar X,\xi) \longrightarrow \Pi_1(X,\xi)_\rW \longrightarrow G \longrightarrow 1.
\]
If we write $\Pi_1(X,\xi)_\rW=(G_i)$, then $\Pi_1(\bar X,\xi)=(\bar G_i)$ with
$\bar G_i=\ker (G_i\to G)$. We denote the $\bar G_i$-torsor over $\bar X$ associated with the projection map $\Pi_1(\bar X,\xi)\to \bar G_i$
by $\bar P_i$.

Consider the functor $\f\mapsto \Gamma(\bar P_i, \f)$ from the category of etale sheaves on $X$ to $G_i$-modules.
The inclusion
$A\to R\Gamma(\bar P_i,A)$ induces a map $R\Gamma_{G_i}(A)\to R\Gamma_{G_i}R\Gamma(\bar P_i,A)$ in the derived category of abelian groups. Since $R\Gamma_{G_i}=R\Gamma_G R\Gamma_{\bar G_i}$, we can write this map in the form
\begin{equation}\label{eq5}
R\Gamma_{G_i} (A)
\longrightarrow R\Gamma_GR\Gamma_{\bar G_i}R\Gamma(\bar P_i,A) .
\end{equation}
Since taking global sections over $\bar P_i$ has an exact left
adjoint, it sends
injectives sheaves on $X$  to injective $\Z \bar G_i$-modules, and
we obtain
$R\Gamma_{\bar G_i}R\Gamma(\bar P_i,A) = R\Gamma(\bar X,A)$. We thus can write (\ref{eq5}) in the form
\begin{equation}\label{eq6}
R\Gamma_{G_i} (A) \longrightarrow R\Gamma_G R\Gamma(\bar X,A).
\end{equation}
Taking cohomology, and passing to the colimit over $i$, we obtain  maps
\[
H^n(\Pi_1^{\ab}(X)_\rW,A)\longrightarrow H^n_{\rW}(X,A), \quad n\ge 0.
\]
For $n=1$ this is the map of the proposition and we have to show that it is an isomorphism. For this we rewrite (\ref{eq6}) in the form
\begin{equation}\label{eq7}
R\Gamma_G R\Gamma_{\bar G_i} (A) \longrightarrow R\Gamma_G R\Gamma(\bar X,A)
\end{equation}
and consider the map of associated spectral sequences, which degenerate to short exact sequences. In degree $1$, we obtain the commutative diagram with exact lines
\[
\begin{tikzcd}[column sep=small]
0 \rar & A_G \dar\rar & H^1(\Pi_1(X)_\rW,A)\dar\rar & H^1(\Pi_1(\bar X),A)^G\dar\rar &0\phantom{.}\\
0\rar& H^0_\et(\bar X,A)_G \rar & H^1_\rW(X,A)\rar&H^1_\et(\bar X,A)^G \rar & 0.
\end{tikzcd}
\]
Since $X$ is geometrically connected, the left hand vertical map is an isomorphism.  The right hand vertical map is an isomorphism by Lemma~\ref{proab}.
Hence the middle map is  an isomorphism.

To show the statement for the tame variant, we note that there is a similar diagram as above for the tame groups. Indeed,  a torsor on $X$ is tame if and only if its base change to $\bar X$ is tame, so we obtain the exact sequence
\[
1 \longrightarrow \Pi_1^t(\bar X,\xi) \longrightarrow \Pi_1^t(X,\xi)_\rW \longrightarrow G \longrightarrow 1.
\]
On the other hand, the lower sequence of the above diagram induces a similar sequence for the  tame Weil-etale cohomology by definition. This time, the right hand vertical map is an isomorphism by the tame version of Lemma~\ref{proab}.
\end{proof}

\medskip
For a finite disjoint union of connected schemes $X=\amalg  X_i$  we write by abuse of notation
\[
\Pi_1^{t,\ab}(X)_\rW= \prod_i \Pi_1^{t,\ab}(X_i)_\rW.
\]

\begin{theorem}
For any $X$, separated and of finite type over a finite field\/ $\F$,
$\pw(X)_\rW $ is isomorphic to a finitely generated abelian group.
\end{theorem}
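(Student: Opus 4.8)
The plan is to induct on $\dim X$, reducing the general case to normal schemes by the blow-up sequence attached to the normalization, and to settle the normal case by a weight argument on the geometric fundamental group. Throughout I work with the functors $A\mapsto H^0_\rW(X,A)$ and $A\mapsto H^1_{\rW,t}(X,A)$: by \eqref{hsss} the former is $H^0_\et(\bar X,A)^G$, represented by the constant finitely generated group $\Z[\pi_0(X)]$, while by Proposition \ref{homvergleich} (tame version) the latter is pro-represented by $\pw(X)_\rW$. Since $\mathrm{Pro(Ab)}$ is abelian and the collection of functors $\Hom_\pro(-,A)$, $A$ ranging over all abelian groups, is exact and jointly detects isomorphisms, any exact sequence of such functors, natural in $A$, dualizes to an exact sequence of pro-groups; moreover an extension built from functors of the form $\Hom(N,-)$ with $N$ constant and finitely generated is again of this form (using $\Ext^1_\Z(\Z,N)=0$ for the free quotients that occur), so finite generation as a constant pro-group propagates through such sequences. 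This dictionary lets me pass freely between the two pictures. By the product convention I may assume $X$ connected, and after replacing $\F$ by its algebraic closure in $\Gamma(X,\O_X)$ — which alters $G$ only by a finite-index subgroup and changes neither side — I may assume $X$ geometrically connected.

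For the inductive step I would use the normalization $\pi\colon X'\to X$, which is \emph{finite}; this keeps the argument unconditional, whereas a de Jong alteration would restrict Proposition \ref{blowcohom} to torsion coefficients. With $Z$ the non-normal locus and $Z'=\pi^{-1}(Z)$, both of dimension $<\dim X$, the finite-$\pi$ case of Proposition \ref{blowcohom} (valid for arbitrary $A$) gives an exact sequence of the functors above. Dualizing via the dictionary places $\pw(X)_\rW$ in an exact sequence of pro-groups between the image of $\pw(X')_\rW\oplus\pw(Z)_\rW$ and a subgroup of the constant finitely generated groups $\Z[\pi_0(-)]$ coming from the $H^0_\rW$-terms (these carry the combinatorial contribution of the singularities). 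By the inductive hypothesis $\pw(Z)_\rW$ and $\pw(Z')_\rW$ are finitely generated, so finite generation of $\pw(X)_\rW$ reduces to that of $\pw(X')_\rW$ with $X'$ normal. The base case $\dim X=0$ is immediate, since $\pw(\Spec\F')_\rW\cong\Z$.

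It remains to treat $X$ normal, hence geometrically unibranch, so that $\pw(X)_\rW=\pi_1^{t,\ab}(X)_\rW$. Here \eqref{hsss} together with the tame version of Lemma \ref{proab} yields, for $X$ geometrically connected, a short exact sequence of pro-groups
\[
0\longrightarrow \pi_1^{t,\ab}(\bar X)_G \longrightarrow \pi_1^{t,\ab}(X)_\rW \longrightarrow \Z \longrightarrow 0,
\]
where $(-)_G$ denotes coinvariants under the Frobenius $G$. Since $\Z$ is free, it suffices to prove that the geometric coinvariants $\pi_1^{t,\ab}(\bar X)_G$ form a finite group, which I would read off from the Pontryagin dual $H^1_t(\bar X,\Q/\Z)^G$ after separating its prime-to-$p$ and $p$ parts.

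The finiteness of these coinvariants is the step I expect to be the main obstacle. One must show that Frobenius acts without the eigenvalue $1$ on the relevant $\ell$-adic cohomology. For geometrically unibranch $\bar X$ this should follow from Deligne's weight bounds: the Frobenius-fixed classes in $H^1$ are exactly the weight-zero part, which is the combinatorial contribution of the branches and therefore vanishes precisely in the geometrically unibranch case — this is the hard input, as it is what distinguishes the normal from the general situation and is the reason the combinatorial free part only entered through the normalization sequence above. A supplementary almost-all-$\ell$ integrality argument is then needed to promote finite corank to genuine finiteness. Granting this, $\pi_1^{t,\ab}(X)_\rW$ is an extension of $\Z$ by a finite group, hence finitely generated, which closes the induction.
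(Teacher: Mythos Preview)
Your reduction to the normal case via the normalization blow-up square is essentially the paper's strategy, though the paper phrases it more directly: it works with the cokernel $\Pi_1^{\ab}(X'/X)$ of $\Pi_1^{\ab}(X')\to\Pi_1^{\ab}(X)$, observes that any torsor trivializing over the (proper) normalization is automatically tame, and then cites \cite[X \S6, p.~109]{sga3} for the fact that $\Pi_1^{\ab}(X'/X)$ is a finitely generated abelian group. This bypasses the functorial ``dictionary'' you set up between exact sequences of $\Hom$-functors and exact sequences in $\mathrm{Pro(Ab)}$, whose exactness properties would otherwise need to be spelled out.

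The real divergence is in the normal case, where you are working much harder than necessary and leave the argument incomplete. The paper does not invoke weights at all. For \emph{smooth} $X$ the finiteness of the geometric part $\Pi_1^{t,\ab}(X)^{\mathrm{geo}}=\ker\big(\Pi_1^{t,\ab}(X)_\rW\to\Z\big)$ is exactly the main theorem of Schmidt--Spie{\ss} \cite{schmidtspiess,schmidt}, taken as input. For $X$ normal one then chooses a smooth dense open $U\subset X$ and uses the surjection $\Pi_1^{t,\ab}(U)^{\mathrm{geo}}\twoheadrightarrow\Pi_1^{t,\ab}(X)^{\mathrm{geo}}$ coming from \cite[V, Prop.~8]{sga1}; that is the entire argument. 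Your weight approach is plausible for the prime-to-$p$ part, but making it rigorous would require identifying the weight-zero piece of tame $H^1$ with the branch combinatorics for merely normal (not smooth) $\bar X$, treating the $p$-primary part separately, and then carrying out the integrality step you allude to --- all of which is already packaged inside the Schmidt--Spie{\ss} theorem. So the gap you flag as ``the main obstacle'' is real in your write-up, but it disappears once you cite the right result.
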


\begin{proof}
Let us first assume that $X$ is normal and connected.  We claim that  the kernel
\[
\Pi_1^{t,\ab}(X)^\mathrm{geo}:= \ker\big(\Pi_1^{t,\ab}(X)_\rW \longrightarrow \Pi_1(\F)_\rW \cong \Z\big)
\]
is a finite abelian group.  If $X$ is smooth, this follows from the main theorem of Schmidt-Spie{\ss} \cite{schmidtspiess,schmidt}.
For a general normal $X$, choose a dense open smooth subscheme $U\subset X$. Then, by \cite[V, Prop.\,8]{sga1},  $\Pi_1^{t,\ab}(U)^\mathrm{geo}$ surjects onto $\Pi_1^{t,\ab}(X)^\mathrm{geo}$, hence the latter group is finite.

Now let $X$ be arbitrary. We can assume that $X$ is connected and reduced. Let $X' \to X$ be the normalization. The cokernel $\Pi_1^\ab(X'/X)$ of $\Pi_1^\ab(X')\to \Pi_1^\ab(X)$ represents the functor $\Pi^1_\ab(X'/X)$ which sends an abelian group $A$ to the set of isomorphism classes of $A$-torsors on $X$ which trivialize over $X'$. We denote the tame version of this group by $\Pi_1^{t,\ab}(X'/X)$ and the cokernel of $\Pi_1^{t,\ab}(X')_\rW\to \Pi_1^{t,\ab}(X)_\rW$ by $C$.   Consider the diagram
\[
\begin{tikzcd}
\Pi_1^\ab(X')\rar \dar & \Pi_1^\ab(X)\dar\rar & \Pi_1^\ab(X'/X) \dar{\alpha}\rar & 0\\
\Pi_1^{t,\ab}(X')\rar  & \Pi_1^{t,\ab}(X)\rar & \Pi_1^{t,\ab}(X'/X) \rar & 0\\
\Pi_1^{t,\ab}(X')_\rW\rar \uar & \Pi_1^{t,\ab}(X)_\rW\uar\rar & C \uar[swap]{\beta}\rar & 0&.
\end{tikzcd}
\]
Since $X'\to X$ is proper, a torsor on $X$ which trivializes over $X'$ is  tame.  Hence $\alpha$ is an isomorphism, and so is $\beta$. By  \cite[X \S 6, p.\,109]{sga3}, $\Pi_1^\ab(X'/X)$ is a finitely generated abelian group, hence so is~$C$. We proved above that the geometric part of $\Pi_1^{t,\ab}(X')_\rW$ (defined componentwise if $X'$ is not connected) is finite. This implies that $\Pi_1^{t,\ab}(X)_\rW$ is constant and finitely generated.
\end{proof}

\section{Weil-Suslin homology}
Let $k$ be a perfect field and $X$ a scheme, separated and of finite type over~$k$. We recall that, for a smooth $k$-scheme $T$,
the group of finite correspondences $\Cor(T,X)$ is the free abelian group
generated by closed integral $Z\subseteq T\times X$ which
are finite and surjective over a component of $T$.
The Suslin complex of $X$ is the complex $C_\bullet(X)=\Cor(\Delta^\bullet,X)$, where $\Delta^i=\Spec k[T_0,\ldots,T_i]/(\Sigma T_i-1)$. Putting $\partial := \sum_{j=0}^i (-1)^j \delta^j_i\in \Cor(\Delta^{i-1},\Delta^i)$, where $\delta_i^j:\Delta^{i-1}\to \Delta^i$, $j=0,\ldots,i$, are the face maps, the differential
$\Cor(\Delta^i,X)\to \Cor(\Delta^{i-1},X) $ is given as the composition of correspondences
$x\mapsto x\circ \partial$.
The following lemma is easy to check from the definitions:

\begin{lemma}\label{corlemma}
Let $f:X\to Y$ be a morphism of schemes. \medskip

{\rm a)} If\/ $X$ and $T$ are smooth and $c\in \Cor(T,X)$, then $(\id_T\times f)_*c= f\circ c$.
Here the left hand side is push-forward of cycles, and the right hand side
is composition of correspondences.

{\rm b)} If\/ $X$ and $Y$ are smooth and $d\in \Cor(Y,Z)$, then $(f\times \id_Z)^*d= d\circ f$.
Here the left hand side is pull-back of cycles, and the right hand side
is composition of correspondences.

{\rm c)} If\/ $f$ is an automorphism of the smooth scheme $X$,
then $f^*c= f^{-1}_*c$ for any cycle~$c$.
\end{lemma}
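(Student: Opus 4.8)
The plan is to unwind the definition of composition of correspondences and compare it with the definitions of push-forward and pull-back of cycles; by bilinearity of $\circ$ and linearity of push-forward and pull-back it suffices to treat a single integral generator, so in (a) and (b) I take $c=[Z]$ and $d=[D]$ with $Z\subseteq T\times X$ and $D\subseteq Y\times Z$ integral, finite and surjective over a component of $T$, resp.\ $Y$. For (a), recall that $f\circ c$ is by definition $p_{TY,*}\big(p_{TX}^{*}c\cdot p_{XY}^{*}[\Gamma_f]\big)$, where $\Gamma_f\subseteq X\times Y$ is the graph, the three maps are the projections from $T\times X\times Y$, and the product is the intersection product. Since $T$ and $X$ are smooth, $p_{XY}$ and $p_{TX}$ are flat (the latter because any $Y$ is flat over the base field), so $p_{XY}^{*}[\Gamma_f]=[T\times\Gamma_f]$ and $p_{TX}^{*}c=[Z\times Y]$ with multiplicity one. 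The cycle $T\times\Gamma_f$ is the image of the section $\iota\colon T\times X\to T\times X\times Y,\ (t,x)\mapsto(t,x,f(x))$, of $p_{TX}$; a dimension count shows that $Z\times Y$ and $T\times\Gamma_f$ meet properly, in $\iota(Z)$. The crucial point is that all intersection multiplicities are one, so that the intersection cycle is $\iota_{*}[Z]$. Since $p_{TY}\circ\iota=\id_T\times f$, pushing forward gives $p_{TY,*}\iota_{*}[Z]=(\id_T\times f)_{*}[Z]$, which is (a).

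For (b) the computation is parallel, but now the graph lies along the smooth factor. Here $d\circ f=p_{XZ,*}\big(p_{XY}^{*}[\Gamma_f]\cdot p_{YZ}^{*}d\big)$ on $X\times Y\times Z$, and $p_{XY}^{*}[\Gamma_f]=[\Gamma_f\times Z]$ is the image of the section $\iota'\colon X\times Z\to X\times Y\times Z,\ (x,z)\mapsto(x,f(x),z)$, of the projection $p_{XZ}$. Because $Y$ is smooth, $p_{XZ}$ is a smooth morphism, so $\iota'$ is a regular closed immersion and the projection formula yields $[\Gamma_f\times Z]\cdot p_{YZ}^{*}d=\iota'_{*}\,\iota'^{*}p_{YZ}^{*}d$. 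Now $p_{YZ}\circ\iota'=f\times\id_Z$, so $\iota'^{*}p_{YZ}^{*}d=(f\times\id_Z)^{*}d$ (this being exactly the lci pull-back of cycles meant in the statement), while $p_{XZ}\circ\iota'=\id$, so pushing forward along $p_{XZ}$ gives $d\circ f=(f\times\id_Z)^{*}d$. Part (c) is then immediate: for an automorphism $f$ both $f$ and $f^{-1}$ are isomorphisms, so for integral $V$ one has $f^{*}[V]=[f^{-1}(V)]=(f^{-1})_{*}[V]$ with all multiplicities one, and the general case follows by linearity.

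The main obstacle is the multiplicity-one assertion in (a). Unlike in (b), the graph in (a) lies along the possibly singular factor $Y$, so $p_{TX}$ need not be smooth and $\iota$ need not be a regular immersion; one cannot simply invoke the projection formula in the singular ambient $T\times X\times Y$. The remedy is to exploit that both cycles are flat pull-backs along projections — every $Y$ being flat over the field, so that $[Z\times Y]$ is multiplicity-free — together with the fact that $T\times\Gamma_f$ is the image of a section: over the generic points of the smooth source $Z$ the two cycles meet transversally, and since the intersection is proper each multiplicity is determined by this generic behaviour and hence equals one. Confirming that this agrees with the multiplicities built into the chosen definition of composition is the only point where genuine, though routine, care is needed.
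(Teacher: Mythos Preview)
The paper does not actually prove this lemma; it only states that it is ``easy to check from the definitions.'' Your argument is correct in all three parts, so there is no paper proof to compare against.

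That said, the multiplicity worry you flag in part (a) is over-engineered. The difficulty you raise---that $Y$ may be singular, so $\iota$ need not be a regular immersion and an intersection product on the non-smooth ambient $T\times X\times Y$ is not a priori available---dissolves once you use the definition of composition via the diagonal of the \emph{smooth} middle factor $X$, which is the one actually used in \cite{MVW}. Concretely: since $\Gamma_f\to X$ is an isomorphism, the fibre product $Z\times_X\Gamma_f$ is scheme-theoretically just $Z$, embedded in $T\times X\times Y$ via $\iota$; there are no higher Tor contributions because $\Gamma_f$ is flat (indeed isomorphic) over $X$. Pushing forward along $p_{TY}$ then gives $(\id_T\times f)_*[Z]$ immediately. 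Equivalently, in your formulation the scheme-theoretic intersection $(Z\times Y)\cap(T\times\Gamma_f)$ is already the integral scheme $\iota(Z)\cong Z$, so the associated cycle has multiplicity one for the trivial reason that an integral scheme has a single generic point---no transversality argument at generic points of $Z$ is needed. Your treatment of (b) via the projection formula for the regular immersion $\iota'$ (legitimate because $Y$ is smooth) and of (c) are exactly right as written.
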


Let $T$ and $X$ be separated schemes of finite type over $k$ and $\sigma \in \Gal(\bar k/k)$.
Then $\sigma$ acts on $\bar X=X\times_k\bar k$ via its action on $\bar k$,
and  on algebraic cycles by pull-back.

\begin{lemma} \label{galact}
The action of $\sigma$ on $\Cor_{\bar k}(\bar T,\bar X)$ induced
by pull-back of algebraic cycles sends a correspondence $\alpha$ to
the composition $\sigma_X^{-1} \, \alpha \,\sigma_T$, where $\sigma_X$ and $\sigma_T$ are the automorphisms of $\bar X$ and $\bar T$ induced by $\sigma$. In other words, the following diagram of correspondences commutes:
\begin{equation}\label{coract}
\begin{tikzcd}
\bar T\rar{\sigma_T}\dar[swap]{\sigma^*\alpha} & \bar T\dar{\alpha}\\
\bar X\rar{\sigma_X}&\bar X.
\end{tikzcd}
\end{equation}
\end{lemma}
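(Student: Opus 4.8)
The plan is to prove the equivalent assertion that the square \eqref{coract} commutes as a diagram of correspondences, and to read the displayed formula off from it. Since both sides are additive in $\alpha$, I would first reduce to the case where $\alpha$ is the class of a single integral closed subscheme $Z\subseteq \bar T\times_{\bar k}\bar X$ that is finite and surjective over a component of $\bar T$. Everything then becomes an identity of cycles on $\bar T\times_{\bar k}\bar X$, and the whole content is to unwind the definition of the Galois pull-back and match it with the cycle-theoretic meaning of composing $\alpha$ with the graphs of $\sigma_T$ and $\sigma_X$.

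First I would recall that the automorphism of $\bar T\times_{\bar k}\bar X$ induced by $\sigma$ is the fibre product $\sigma_T\times\sigma_X$ of the two automorphisms $\sigma_T$ and $\sigma_X$, both of which cover $\Spec\sigma$ on $\Spec\bar k$; by definition the Galois action on cycles is pull-back along this automorphism, so $\sigma^*\alpha=(\sigma_T\times\sigma_X)^*Z$, i.e. set-theoretically the cycle $\{(t,x):(\sigma_T t,\sigma_X x)\in Z\}$ with the induced multiplicities. On the other hand, composing a correspondence with the graph of an isomorphism is transparent and requires no smoothness hypothesis: right composition with $\sigma_T$ is pull-back of cycles along the source automorphism, and left composition with $\sigma_X^{-1}$ is push-forward along the target automorphism, which for an isomorphism equals pull-back along $\sigma_X$ (this is precisely the content of Lemma~\ref{corlemma}(a)--(c), and for the graph of an isomorphism the intersection is transverse and the projection restricts to an isomorphism onto its image, so multiplicities are $1$ and no smoothness of $X$ is needed). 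Chaining the two operations transports $Z$ to the same cycle $\{(t,x):(\sigma_T t,\sigma_X x)\in Z\}$, which is $(\sigma_T\times\sigma_X)^*Z=\sigma^*\alpha$; this yields the formula $\sigma_X^{-1}\,\alpha\,\sigma_T=\sigma^*\alpha$ and, equivalently since $\sigma_X$ is invertible, the commutativity of \eqref{coract}.

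The step requiring care — and the main obstacle — is that $\sigma_T$ and $\sigma_X$ are only $\sigma$-semilinear, i.e. morphisms over $k$ rather than over $\bar k$. Consequently neither $\sigma_T\times\id_{\bar X}$ nor $\id_{\bar T}\times\sigma_X$ is a well-defined automorphism of the $\bar k$-scheme $\bar T\times_{\bar k}\bar X$: viewing $\bar T\times_{\bar k}\bar X$ as the diagonal fibre of $\bar T\times_k\bar X$ over $\Spec(\bar k\otimes_k\bar k)$, each of these factors carries it to the $\sigma$-twisted fibre, and only the composite $\sigma_T\times\sigma_X$, which applies $\sigma$ in both factors, brings it back to itself. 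I would therefore organize the computation so that only the combined automorphism $\sigma_T\times\sigma_X$ ever appears, or else carry it out on the product $\bar T\times_k\bar X$ over $k$ and check at the end that the two intermediate passages through the $\sigma$-twisted fibre cancel and land back in the diagonal fibre $\bar T\times_{\bar k}\bar X$. The remaining bookkeeping is the orientation convention (pull-back versus push-forward, and $\sigma$ versus $\sigma^{-1}$), which must be pinned down consistently with the meaning of $\sigma_T$ and $\sigma_X$ as the automorphisms ``induced by $\sigma$''; once these conventions are fixed the identity is a direct unwinding.
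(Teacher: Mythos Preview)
Your approach is correct and is essentially the same as the paper's. The paper's entire proof is the one-line calculation
\[
\sigma_X \circ \sigma^*\alpha
= (\id_T\times \sigma_X)_* (\sigma_T\times \sigma_X)^*\alpha
= (\id_T\times \sigma_X^{-1})^* (\id_T\times \sigma_X)^*(\sigma_T\times \id_X)^* \alpha
= \alpha\circ \sigma_T,
\]
which is exactly your argument compressed: apply Lemma~\ref{corlemma}(a) to rewrite $\sigma_X\circ(-)$ as $(\id_T\times\sigma_X)_*$, use (c) to turn this push-forward into $(\id_T\times\sigma_X^{-1})^*$, factor $(\sigma_T\times\sigma_X)^*$, cancel, and apply (b) to recognise $(\sigma_T\times\id_X)^*\alpha$ as $\alpha\circ\sigma_T$. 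Your explicit flagging of the semilinearity issue---that $\id_{\bar T}\times\sigma_X$ and $\sigma_T\times\id_{\bar X}$ are not automorphisms of $\bar T\times_{\bar k}\bar X$ but only of $\bar T\times_k\bar X$---is a point the paper's displayed line passes over in silence; your proposed fix (carry out the intermediate steps over $k$, or only combine the two offending factors once both are present) is the correct way to make that middle equality precise, and is a genuine clarification.
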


\begin{proof}
From  Lemma~\ref{corlemma}, we have
\begin{multline*}
\sigma_X \circ \sigma^*\alpha=
(\id_T\times \sigma_X)_* (\sigma_T\times \sigma_X)^*\alpha\\
= (\id_T\times \sigma_X^{-1})^* (\id_T\times \sigma_X)^*(\sigma_T\times \id_X)^* \alpha
 = \alpha\circ \sigma_T.
\end{multline*}
\end{proof}

Now we assume that $k=\F$ is a finite field.  Let $\bar X$ be the extension to
the algebraic closure $\bar \F$, and let $F$ be the Frobenius automorphism of $\bar \F/\F$.
Let $G\cong \Z$ be the Weil group of $\F$, generated by the Frobenius $F$.

\begin{definition}
Weil-Suslin homology $H_i^\WS(X,A)$ with coefficients in the abelian group $A$
is defined as the homology of the cone of
\[C_\bullet(\bar X)\otimes A \stackrel{1-F^*}{\longrightarrow} C_\bullet(\bar X)\otimes A,\]
i.e., the total complex of the double complex
\[
\begin{tikzcd}
\cdots \rar&C_2(\bar X)\otimes A \rar{-\partial}\dar{1-F^*} &C_1(\bar X)\otimes A\rar{-\partial}\dar{1-F^*}&C_0(\bar X)\otimes A\dar{1-F^*}\\
\cdots \rar&C_2(\bar X)\otimes A \rar{\partial} &C_1(\bar X)\otimes A\rar{\partial}&C_0(\bar X)\otimes A.
\end{tikzcd}
\]
\end{definition}

In degree $i$, the total complex consists of elements
\[(x_i,x_{i-1})\in
\Cor(\bar \Delta^i,\bar X)\otimes A \, \oplus \, \Cor(\bar \Delta^{i-1},\bar X)\otimes A\]
with differential
\begin{equation}\label{part0}
(x,y) \longmapsto
( x\partial + y- F^{-1}yF ,- y\partial).
\end{equation}
The spectral sequence for double complexes gives short exact sequences
\begin{equation}\label{hsss1}
0\to H_i^\rS(\bar X,A)_G \to H_i^\WS(X,A) \to H_{i-1}^\rS(\bar X,A)^G\to 0
\end{equation}
where the left hand side and right hand side are the coinvariants and invariants
with respect to $G$, respectively.
The map $C_\bullet(X) \to C_\bullet(\bar X)$, sending a generator
$Z\subseteq X\times \Delta^i$ to its pull-back
to the algebraic closure, has image in the kernel of $1-F^*$. Therefore, we obtain  natural maps  for $i\ge 0$.
\begin{equation}\label{sustows}
H_i^\rS(X,A)\longrightarrow H_{i+1}^\WS(X,A).
\end{equation}
\begin{remark}\label{wsalsabglfunktr}
For a torsion $\Gal_\F$-module $M$, we have $R\Gamma(\F,M)\cong [M \xrightarrow{1-F^*} M]$, where the last complex is concentrated in (cohomological) degrees zero and one. Hence, if $A$ is a torsion group, then
\[
H_i^\WS(X,A)= H^{1-i}\Big(R\Gamma\big(\F,C_\bullet(\bar X)\otimes A \big) \Big).
\]
\end{remark}
\begin{remark}\label{basefieldchange}
The definition of Weil-Suslin homology depends on the finite base field $\F$ (via $F\in \Gal_\F$). However, if $X\to \F$ factors through $\F'/\F$, then the Weil-Suslin homology of $X$ does not depend on whether we consider $X$ as a scheme over $\F'$ or over $\F$.
\end{remark}

\begin{proposition}\label{blowupseq}
An abstract blow-up diagram \eqref{blowupdef}
induces a long exact sequence of Weil-Suslin homology groups
\begin{multline*}
H_1^\WS(Z',A) \longrightarrow  H_1^\WS(X',A) \oplus H_1^\WS(Z,A) \longrightarrow
H_1^\WS(X,A)\longrightarrow  \\
H_0^\WS(Z',A) \longrightarrow H_0^\WS(X',A) \oplus H_0^\WS(Z,A) \longrightarrow
H_0^\WS(X,A) \longrightarrow 0.
\end{multline*}
\end{proposition}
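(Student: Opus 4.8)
The plan is to produce the sequence at the level of Suslin complexes over $\bar\F$ and then push it through the cone of $1-F^*$. Since all four schemes in \eqref{blowupdef}, together with the maps $i,i',\pi,\pi'$, are defined over $\F$, the Frobenius $F$ acts compatibly on everything and commutes with the pushforward maps; moreover Weil--Suslin homology is by definition the homology of the cone of $1-F^*$ on $C_\bullet(\bar X)\otimes A$, and forming this cone is an exact operation fitting into the short exact sequences \eqref{hsss1}. It therefore suffices to establish, $G$-equivariantly and with $A$-coefficients, the exactness in homological degrees $0$ and $1$ of the three-term sequence of Suslin complexes
\[
C_\bullet(\bar Z') \longrightarrow C_\bullet(\bar Z)\oplus C_\bullet(\bar X') \longrightarrow C_\bullet(\bar X),
\]
whose arrows are the pushforwards along $\pi',i'$ and along $i,-\pi$; by Lemma~\ref{corlemma} these pushforwards of cycles agree with compositions of correspondences, and the commutativity of \eqref{blowupdef} shows the composite is zero.

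First I would treat the two easy forms of exactness, valid in every degree. Injectivity on the left is clear because $i'$ is a closed immersion, so $i'_*$ is injective on cycles. For exactness in the middle, suppose $(\beta,\gamma)$ maps to $i_*\beta-\pi_*\gamma=0$; since $\pi$ restricts to an isomorphism over $X-Z$, no component of $\gamma$ lying over $X'-Z'$ can be cancelled, forcing $\gamma$ to be supported on $\bar Z'$, i.e.\ $\gamma=i'_*\alpha$, whence $\beta=\pi'_*\alpha$. The crux is surjectivity onto $C_\bullet(\bar X)$, and this is exactly where the restriction to degrees $\le 1$ enters. A generator is an integral $W\subseteq\bar\Delta^n\times\bar X$ finite and surjective over $\bar\Delta^n$. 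If $W$ lies over $\bar Z$ it comes from $C_\bullet(\bar Z)$; otherwise one takes its strict transform $\tilde W\subseteq\bar\Delta^n\times\bar X'$, which satisfies $\pi_*\tilde W=W$. For $n=0$ the point $W$ lifts through the isomorphism $X'-Z'\cong X-Z$, and for $n=1$ the strict transform of the curve $W$ is again a curve mapping properly and birationally onto $W$, hence finite over $\bar\Delta^1$, so $\tilde W\in\Cor(\bar\Delta^1,\bar X')$. For $n\ge2$ the strict transform may acquire positive-dimensional exceptional fibres and fail to be finite over $\bar\Delta^n$, so the cokernel complex $Q_\bullet$ of the right-hand map is concentrated in degrees $\ge 2$.

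Because $Q_\bullet$ vanishes in degrees $\le 1$, the discrepancy it measures affects only degrees $\ge 2$, and the homology long exact sequence of the three-term sequence is exact through degree $1$; applying the exact cone-of-$(1-F^*)$ functor and using \eqref{hsss1} then gives the asserted Weil--Suslin sequence, which involves only the degree $0$ and $1$ parts of the Suslin complexes. The step I expect to be the real obstacle is the well-definedness of the connecting homomorphism $H_1^\WS(X,A)\to H_0^\WS(Z',A)$: concretely one must check that the boundary of a ``bad'' degree-two cycle $w\in C_2(\bar X)$ — one whose strict transform is not finite over $\bar\Delta^2$, so that $w$ generates $Q_2$ — still lifts to $\bar Z\amalg\bar X'$ in a way that produces a boundary on $\bar Z'$. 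This amounts to the compatibility of restriction to the faces $\bar\Delta^1\hookrightarrow\bar\Delta^2$ with proper pushforward of strict transforms, i.e.\ to the functoriality of Suslin--Voevodsky relative cycles, and it is this compatibility, rather than any further surjectivity statement, that makes the degree-one part of the sequence exact.
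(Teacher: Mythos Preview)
Your overall strategy matches the paper's: reduce to the cokernel complex $K_\bullet$ of
\[
0\to C_\bullet(\bar Z') \to C_\bullet(\bar X')\oplus C_\bullet(\bar Z) \to C_\bullet(\bar X)\to K_\bullet\to 0
\]
and show it is acyclic in low degrees. Your arguments for injectivity on the left, exactness in the middle, and $K_0=K_1=0$ via strict transforms are fine. The gap is that $K_0=K_1=0$ is \emph{not} enough. The terms $H_1^\WS$ and the connecting map involve $C_2$, contrary to your assertion that the sequence ``involves only the degree $0$ and $1$ parts of the Suslin complexes''; what is actually needed (and what the paper states explicitly and imports from \cite[Prop.\,5.2]{tho-alex}) is the stronger vanishing $H_i(K_\bullet)=0$ for $i\le 2$. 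The crucial new input is $H_2(K_\bullet)=0$: every ``bad'' $u\in C_2(\bar X)$ becomes liftable after adding a boundary $\partial w$ from $C_3(\bar X)$.

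You correctly sense in your last paragraph that something remains to be checked, but you misidentify the obstruction and do not resolve it. Concretely, if $(x,y)=\partial^\WS(u,v)$ with $u$ not liftable to $C_2(\bar X')\oplus C_2(\bar Z)$, your recipe for the connecting map produces the element $\widehat{u\partial}\,\partial\in C_0(\bar Z')$, where $\widehat{u\partial}$ is an arbitrary lift of $u\partial$; there is no reason for this to vanish or to be a boundary in $C_\bullet^\WS(\bar Z')$ unless one can replace $u$ by $u-\partial w$ with a liftable representative, which is precisely $H_2(K_\bullet)=0$. The same vanishing is what guarantees exactness at $H_1^\WS(X)$, since $H_2^\WS(K)$ (which by \eqref{hsss1} equals $H_2^\rS(K)_G$ when $K_1=0$) must vanish for $H_1^\WS(\mathrm{im})\to H_1^\WS(X)$ to be an isomorphism. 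Your appeal to ``functoriality of Suslin--Voevodsky relative cycles'' does not supply this; a genuine geometric argument in degree $2$ is required, and that is the content the paper cites.
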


\begin{proof}
By definition of Weil-Suslin homology, only the terms $C_i(\bar ?)$ for $i\leq 2$
are involved in the definition of the terms in the sequence,
hence a diagram chase shows that it suffices to show that in
the short exact sequence of complexes
\[ 0\to C_\bullet(\bar Z') \to C_\bullet(\bar X')\oplus C_\bullet(\bar Z)
\to C_\bullet(\bar X)  \to K_\bullet \to 0\]
one has $H_i(K_\bullet)=0$ for $i\leq 2$.
This was shown in the proof of  \cite[Prop.\,5.2]{tho-alex}.
\end{proof}

Since $H_0^\rS(\bar \F,\Z) = \Z$ and $H_i^\rS(\bar \F,\Z) = 0$ for $i\geq 1$, we can calculate the Weil-Suslin homology of the point using \eqref{hsss1} as follows.
\begin{example}\label{wsofpoint}
\[
H_i^\WS(\F,\Z) \cong \left\{
\begin{array}{ll}
\Z& i=0,1,\\
0 & i\ge 2.
\end{array} \right.
\]
\end{example}
Let $C$ by a smooth, proper, geometrically connected curve over $\F$. By \cite{Li}, we have
\[
H_i^\rS(\bar C,\Z) \cong \left\{
\begin{array}{ll}
\Pic(\bar C)& i=0,\\
\bar \F^\times & i= 1,\\
0 & i \ge 2.
\end{array} \right.
\]
Regarding $\bar \F^\times_G = 0$ and $\Pic(\bar C)_G\cong \Z$,  the exact sequence \eqref{hsss1} yields the following:

\begin{example}\label{wssmothcurve}
Let $C$ be a smooth, proper, geometrically connected curve over\/ $\F$. Then
\[
H_i^\WS(C,\Z) \cong \left\{
\begin{array}{ll}
\Z& i=0,\\
\Pic(C) & i= 1,\\
\F^\times& i= 2,\\
0& i\ge 3.
\end{array} \right.
\]
\end{example}

The following improves \cite[Prop.\,7.8]{geissersuslin}:

\begin{proposition}\label{sushomdiv}
Let $X$ be a connected, separated scheme of finite type over $\F$.  Then the structure map induces an isomorphism
\[
\deg: H_0^\WS(X,\Z)\stackrel{\sim}{\longrightarrow} H_0^\WS(\F,\Z)\cong \Z .
\]
\end{proposition}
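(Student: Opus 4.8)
The plan is to use the short exact sequence \eqref{hsss1} to reformulate the statement as a question about $G$-coinvariants, and then to reduce everything to curves, where Lang's theorem does the work. Since the Suslin complex only depends on the reduced structure, I may assume $X$ reduced, and after replacing $\F$ by its algebraic closure in $\Gamma(X,\O_X)$ — which by Remark \ref{basefieldchange} changes neither side — I may assume $X$ geometrically connected, so that $\bar X$ is connected. Because $H_{-1}^\rS(\bar X,\Z)=0$, the sequence \eqref{hsss1} gives $H_0^\WS(X,\Z)\cong H_0^\rS(\bar X,\Z)_G$, and under this identification $\deg$ is induced by the degree map $H_0^\rS(\bar X,\Z)\to H_0^\rS(\bar\F,\Z)=\Z$. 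Now $H_0^\rS(\bar X,\Z)$ is generated by the classes $[P]$ of the closed points $P\in\bar X(\bar\F)$, each of degree $1$, so $\deg$ is surjective; and it will be an isomorphism as soon as I show that $[P]=[Q]$ in $H_0^\rS(\bar X,\Z)_G$ for any two closed points $P,Q$, since then the coinvariants are generated by a single degree-$1$ class and the degree map forces them to be $\Z$.

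To prove the curve case first, let $C$ be a smooth, geometrically connected curve over a finite field $\F_n$, with smooth compactification $C'$ and $S=C'\setminus C$. Then $H_0^\rS(\bar C,\Z)$ is an extension of $\Z$ (the degree) by the group of $\bar\F$-points of the generalized Jacobian $J$ attached to $(\bar C',\bar S)$, a connected commutative (semi-abelian) group over $\F_n$; this is the relative Picard group, specializing to Example \ref{wssmothcurve} when $S=\emptyset$. By Lang's theorem, $1-F^n$ is surjective on $J(\bar\F)$, so the degree-zero part vanishes in the $G$-coinvariants and $H_0^\rS(\bar C,\Z)_G\cong\Z$ via $\deg$. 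In particular any two closed points of $\bar C$, both of degree $1$, become equal in $H_0^\rS(\bar C,\Z)_G$.

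For general $X$ I would connect $P$ and $Q$ by a chain of integral curves in $X$, possible since $\bar X$ is connected. The endpoints together with this finite chain are defined over some finite subextension $\F_n/\F$, and after enlarging $n$ each link arises from an integral curve $C\subseteq X_n:=X\times_\F\F_n$. Viewing the composite $C\to X_n\to X$ as a morphism of $\F$-schemes and normalizing, I obtain for each link a smooth curve $\tilde C$ over $\F$ and a $G$-equivariant map $H_0^\rS(\bar{\tilde C},\Z)\to H_0^\rS(\bar X,\Z)$ sending chosen preimages of the two consecutive points to the corresponding point classes. Since $\bar{\tilde C}$ is the disjoint union of the Galois conjugates of a geometrically connected smooth curve over $\F_n$, taking $G$-coinvariants reduces to the curve case just treated, whence the two point classes agree in $H_0^\rS(\bar X,\Z)_G$; chaining the links gives $[P]=[Q]$. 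I expect the main obstacle to be precisely this geometric input in the last step: producing, after a controlled finite base extension, a chain of curves defined over the finite field and tracking the Frobenius action through the normalization (for which a Bertini-type argument over finite fields should suffice). By contrast, the vanishing of the degree-zero coinvariants — the arithmetic heart of the proposition — is supplied cleanly by Lang's theorem.
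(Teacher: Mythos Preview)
Your proof is correct and follows the paper's strategy: reduce to curves and then kill the degree-zero part via Lang's theorem on the semi-abelian Albanese. The only difference is organizational, in the reduction step. Rather than showing $[P]=[Q]$ by chaining curves through $\bar X$ and tracking Galois conjugates on the components of each normalized link, the paper observes that any class in $H_0^\rS(\bar X,\Z)_G$ is represented by a zero-cycle whose finite support maps into a single connected curve $C\subset X$ (citing \cite{mumford}, II~\S6, Lemma), so one may take $X=C$ outright; lifting a degree-zero class to the normalization and using Remark~\ref{basefieldchange} lands directly in the case of a smooth geometrically connected curve with a rational point. In particular, the geometric input you flagged as the main obstacle is nothing more than this classical ``finitely many points lie on a connected curve'' lemma---no Bertini-type argument is needed.
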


\begin{proof} We have $H_0^\WS(X,\Z)\cong H_0^\rS(\bar X,\Z)_G$, hence $\deg$ is surjective and it remains to show that its kernel is trivial.
Since elements in $H_0^\rS(\bar X,\Z)$ are represented by zero-cycles,  any element of $H_0^\WS(X,\Z)$ comes by push-forward from $H_0^\WS(C,\Z)$ for some connected curve $C\subset X$ (use, e.g., \cite{mumford}, II \S 6 Lemma). We therefore can  assume that $X=C$ is a connected curve.

If $C'\to C$ is finite and surjective, then
$
 H_0^\WS(C',\Z)\to H_0^\WS(C,\Z)
$
is surjective. Moreover, any element of degree zero in $H_0^\WS(\bar C,\Z)$ can be lifted to an element in the kernel of the multi-degree map
\[
 H_0^\WS(C',\Z) \longrightarrow \Z^{\pi_0(C')}.
\]
We therefore can assume that $C$ is a normal, connected curve. Moreover, we can assume that $C$ has an $\F$-rational point (use Remark~\ref{basefieldchange}).
Let $\mathcal{A}$ be the semi-abelian Albanese variety of $C$. Then, by \cite[Thm.\,3.1]{susvoe} (and \cite{Li} if $C$ is proper), the degree zero part of $H_0^\rS(\bar C,\Z)$ is isomorphic to $\mathcal{A}(\bar \F)$. The $G$-coinvariants of this group are both finite and divisible, hence trivial.
\end{proof}

\begin{corollary} \label{sushomdivcor}
The canonical injection
\[
H_1^\WS(X,\Z)/m \longrightarrow H_1^\WS(X,\Z/m)
\]
is an isomorphism for any $m\ge1 $.
\end{corollary}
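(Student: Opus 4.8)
The plan is to deduce the corollary from a Bockstein (universal-coefficient) argument, using as the only serious input the computation of $H_0^\WS$ in Proposition~\ref{sushomdiv}. The starting observation is that the total complex computing $H_\bullet^\WS(X,A)$---the cone of $1-F^*$ on $C_\bullet(\bar X)\otimes A$---is obtained from a single complex of \emph{free} abelian groups by tensoring with $A$. Indeed, each correspondence group $\Cor(\bar\Delta^i,\bar X)$ is free abelian, so the degree-$i$ term $C_i(\bar X)\oplus C_{i-1}(\bar X)$ of the integral complex, which I denote $WC_\bullet(X,\Z)$, is free, and all differentials (both $\partial$ and $1-F^*$) are defined over $\Z$. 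Hence $WC_\bullet(X,A)=WC_\bullet(X,\Z)\otimes_\Z A$ termwise, and $WC_\bullet(X,\Z)$ is a complex of flat $\Z$-modules.

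First I would tensor the short exact sequence $0\to\Z\xrightarrow{m}\Z\to\Z/m\to0$ with the flat complex $WC_\bullet(X,\Z)$. Flatness keeps the resulting sequence of complexes
\[
0\longrightarrow WC_\bullet(X,\Z)\stackrel{m}{\longrightarrow} WC_\bullet(X,\Z)\longrightarrow WC_\bullet(X,\Z/m)\longrightarrow 0
\]
exact, and its long exact homology sequence is the Bockstein sequence
\[
\cdots\to H_1^\WS(X,\Z)\stackrel{m}{\to} H_1^\WS(X,\Z)\to H_1^\WS(X,\Z/m)\to H_0^\WS(X,\Z)\stackrel{m}{\to} H_0^\WS(X,\Z)\to\cdots.
\]
Splicing this out yields the short exact sequence
\[
0\longrightarrow H_1^\WS(X,\Z)/m\longrightarrow H_1^\WS(X,\Z/m)\longrightarrow H_0^\WS(X,\Z)[m]\longrightarrow 0,
\]
whose left-hand arrow I would identify with the canonical injection named in the statement, checking that it is the map induced by the reduction of coefficients $\Z\to\Z/m$.

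It then remains to show that the obstruction term $H_0^\WS(X,\Z)[m]$ vanishes, and this is exactly where Proposition~\ref{sushomdiv} enters: for connected $X$ it gives $H_0^\WS(X,\Z)\cong\Z$, which is torsion free, so $H_0^\WS(X,\Z)[m]=0$; for general $X$ one passes to connected components, where $H_0^\WS(X,\Z)$ is a finite direct sum of copies of $\Z$ and hence still torsion free. Consequently the canonical injection is also surjective, which proves the corollary. I do not anticipate a genuine obstacle here, since the one nontrivial ingredient---the torsion-freeness of $H_0^\WS(X,\Z)$---has already been supplied by Proposition~\ref{sushomdiv}; the remaining work, namely matching the left-hand Bockstein map with the canonical injection, is routine once the flatness of $WC_\bullet(X,\Z)$ has been recorded.
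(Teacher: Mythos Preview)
Your proof is correct and follows essentially the same approach as the paper: the paper's one-line proof simply records that the cokernel of the canonical injection is $\ker\big(H_0^\WS(X,\Z)\xrightarrow{\cdot m}H_0^\WS(X,\Z)\big)=0$, which is precisely the Bockstein argument you spell out in detail using the freeness of the Suslin complex and Proposition~\ref{sushomdiv}. Your version is more explicit (and even handles the non-connected case), but the underlying idea is identical.
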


\begin{proof}
The cokernel is isomorphic to $\ker(H_0^\WS(X,\Z)\stackrel{\cdot m}{\to} H_0^\WS(X,\Z))=0$.
\end{proof}

\section{Duality}

We say that ``resolution of singularities holds for schemes of dimension $\leq d$ over a perfect field $k$'' if the following two conditions are satisfied.

\medskip
\begin{compactitem}
\item[\rm (1)] For any integral separated scheme of finite type $X$ of dimension $\leq d$ over~$k$,  there exists a
projective birational morphism $Y \to X$ with $Y$ smooth over~$k$ which is an isomorphism over the regular locus of $X$.
\item[\rm (2)] For any integral smooth scheme $X$ of dimension $\leq d$ over $k$ and any birational proper morphism
$Y \to X$ there exists a tower of morphisms $X_n \to X_{n-1} \to \dots \to X_0= X$, such that $X_n \to X_{n-1}$  is a blow-up with a smooth center for $i=1,\ldots,n$, and such that the composite morphism $X_n \to X$ factors through $Y \to X$.
\end{compactitem}

\bigskip
In this paper, we do not use the full duality statement below, but only the equality of the orders of the respective groups.

\begin{theorem}\label{duall} Let $X$ be separated and of finite type over the finite field\/ $\F$ of characteristic~$p$.
If $m$ is prime to $p$, then
there is a perfect pairing of finite groups
\[H_i^{\WS}(X,\Z/m)\times H^i_\et(X,\Z/m)\longrightarrow \Z/m.\]
If\/ $X$ is smooth and resolution of singularities for schemes of dimension $\le \dim X+1$ holds over $\F$,
then there is a perfect pairing of finite groups for any $r\ge 1$,
\[H_1^{\WS}(X,\Z/p^r)\times H^1_t(X,\Z/p^r)\longrightarrow \Z/p^r.\]
\end{theorem}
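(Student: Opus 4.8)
The plan is to reduce both statements to a duality over the algebraic closure $\bar\F$ combined with the self-duality of the Weil group $G\cong\Z$. Recall from Remark~\ref{wsalsabglfunktr} that for torsion coefficients $H_i^\WS(X,A)=H^{1-i}(R\Gamma(G,C_\bullet(\bar X)\otimes A))$, where $R\Gamma(G,-)$ is the two-term complex $[\,M\xrightarrow{1-F^*}M\,]$, and that $H^i_\et(X,A)=H^i_\rW(X,A)=H^i(R\Gamma(G,R\Gamma_\et(\bar X,A)))$ for such $A$. The arithmetic input is that $G$ has cohomological dimension one and satisfies $R\Gamma(G,M^\vee)\cong R\Gamma(G,M)^\vee[-1]$ for finite $G$-modules $M$, where $M^\vee=\Hom(M,\Z/m)$; concretely this is the perfect pairing $H^j(G,M)\times H^{1-j}(G,M^\vee)\to H^1(G,\Z/m)=\Z/m$. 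The geometric input differs in the two cases.

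In the prime-to-$p$ case the geometric input is the Suslin--Voevodsky rigidity/duality theorem \cite{susvoe}, which I would use to produce a quasi-isomorphism
\[
C_\bullet(\bar X)\otimes\Z/m\;\simeq\;R\Gamma_\et(\bar X,\Z/m)^\vee
\]
in the derived category of $\Z/m[G]$-modules, where $G$ acts on correspondences as in Lemma~\ref{galact} and functoriality of the pairing makes the identification $G$-equivariant. Applying $R\Gamma(G,-)$ and the self-duality above yields $R\Gamma(G,C_\bullet(\bar X)\otimes\Z/m)\simeq R\Gamma(G,R\Gamma_\et(\bar X,\Z/m))^\vee[-1]$. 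Taking cohomology and substituting the two descriptions recalled above, the shift $[-1]$ absorbs the index flip and one reads off $H_i^\WS(X,\Z/m)\cong H^i_\et(X,\Z/m)^\vee$, i.e.\ the asserted perfect pairing. Finiteness is automatic, since $H^j_\et(\bar X,\Z/m)$ is finite for a finite-type scheme with finite prime-to-$p$ coefficients and the sequences \eqref{hsss} and \eqref{hsss1} propagate finiteness to all four groups.

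For the $p$-part the Suslin--Voevodsky comparison is unavailable, so I would argue through tame class field theory, using that only degree one is asserted and that $X$ is smooth. By the tame version of Lemma~\ref{proab}, and since a homomorphism to a finite group factors through the profinite completion, the cohomology side is
\[
H^1_t(X,\Z/p^r)=\Hom(\pw(X),\Z/p^r)=\Hom(\pw(X)_\rW,\Z/p^r).
\]
On the homology side, the map \eqref{sustows} is an isomorphism $H_0^\rS(X,\Z/p^r)\xrightarrow{\sim}H_1^\WS(X,\Z/p^r)$ by the Jannsen--Kerz--Saito input \cite{KS} (this is where resolution of singularities in dimension $\le\dim X+1$ enters), while the universal coefficient sequence gives $H_0^\rS(X,\Z/p^r)=H_0^\rS(X,\Z)/p^r$ and Schmidt--Spie{\ss} \cite{schmidtspiess,schmidt} identifies $H_0^\rS(X,\Z)\cong\pw(X)_\rW$ (the enlarged and classical groups agreeing since $X$ is smooth, hence geometrically unibranch). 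Both sides are thus the two factors of the evaluation pairing
\[
\pw(X)_\rW/p^r\times\Hom(\pw(X)_\rW,\Z/p^r)\longrightarrow\Z/p^r,
\]
which is perfect between finite groups precisely because $\pw(X)_\rW$ is a finitely generated abelian group by Theorem~\ref{fingen}.

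The main obstacle lies in the prime-to-$p$ case: upgrading Suslin--Voevodsky from an isomorphism of (co)homology groups to a $G$-equivariant duality of complexes, so that $R\Gamma(G,-)$ may legitimately be applied. This requires producing the pairing at the chain level and verifying its compatibility with the Galois action of Lemma~\ref{galact} and with the cup/cap products, together with careful index bookkeeping through the two dualities. In the $p$-case the pairing is essentially formal once the identifications are in place, so there the genuine content is concentrated in the cited deep inputs---the isomorphism \eqref{sustows} for smooth $X$ and the finite generation of Theorem~\ref{fingen}---and in checking that the abstract perfect pairing obtained this way agrees with the natural cup-product pairing one writes down.
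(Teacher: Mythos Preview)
Your treatment of the prime-to-$p$ statement is essentially the paper's argument unpacked: the reference \cite[Thm.\,5.4, 5.5]{geissersuslin} that the paper cites establishes precisely the perfect pairing $H^{\mathrm{GS}}_{i-1}(X,\Z/m)\times H^i_\et(X,\Z/m)\to\Z/m$ by promoting Suslin--Voevodsky duality over $\bar\F$ to a $G$-equivariant statement and then taking $R\Gamma(G,-)$, and the identification $H^{\mathrm{GS}}_{i-1}\cong H^\WS_i$ for torsion coefficients is exactly your use of Remark~\ref{wsalsabglfunktr}. The obstacle you flag (a $G$-equivariant chain-level duality) is the genuine content of that reference.

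For the $p$-part your route is genuinely different from the paper's, and it has a circularity problem. The paper first passes to a smooth compactification $X'$ using \cite[Prop.\,6.2, Prop.\,2.10]{tho-alex}, and then for smooth proper $X'$ invokes the arithmetic duality of \cite[Thm.\,5.1]{tho-duality} together with the comparison of the Suslin complex with Bloch's cycle complex \cite[Thm.\,2.7]{schmidtspiess}. You instead want to go through class field theory, and the key step is the isomorphism $H_0^\rS(X,\Z/p^r)\xrightarrow{\sim}H_1^\WS(X,\Z/p^r)$ of \eqref{sustows}. But unwind what this says: by Corollary~\ref{sushomdivcor} and universal coefficients it is the assertion that $H_0^\rS(X,\Z)/p^r\to H_1^\WS(X,\Z)/p^r$ is an isomorphism; since by Proposition~\ref{pointcase} and Schmidt--Spie{\ss} the composite with $\rec_X$ is the isomorphism $r_X$, the map \eqref{sustows} is split injective with cokernel $\ker(\rec_X)$, and your claim amounts to $\ker(\rec_X)/p^r=0$. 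In the paper's logical structure that is a consequence of Theorem~\ref{maintheorem}, whose proof uses Theorem~\ref{mainthmmodm}, whose proof uses the very Theorem~\ref{duall} you are proving. The parenthetical remark in the introduction that ``for $i=0$ this follows from the theorem of Schmidt--Spie{\ss}'' is not a citable input here; without an independent argument for the $p$-divisibility of $\ker(\rec_X)$ your proof is circular. (Incidentally, the introduction attributes the $i=0$ case to Schmidt--Spie{\ss}, not to \cite{KS}; the latter is only needed for $i\ge 1$.)

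A non-circular variant of your idea would be to bypass $H_0^\rS$ entirely and compare orders through the two short exact sequences: \eqref{hsss1} for $H_1^\WS(X,\Z/p^r)$ and the tame analogue of \eqref{hsss} for $H^1_t(X,\Z/p^r)$, using the perfect pairings $H_i^\rS(\bar X,\Z/p^r)\times H^i_t(\bar X,\Z/p^r)\to\Z/p^r$ for $i=0,1$ over $\bar\F$ from \cite{tho-alex} (which require resolution of singularities). That matches the spirit of your approach but avoids the loop; it is, however, closer to the paper's own method than to the class-field-theoretic shortcut you sketched.
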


\begin{proof}
By \cite[Thm.\,5.4, Thm.\,5.5]{geissersuslin}, we have a perfect pairing of finite groups
\[
H_{i-1}^{\mathrm{GS}}(X,\Z/m) \times H^i_{\et}(X,\Z/m) \longrightarrow \Z/m,
\]
where $H_{i-1}^{\mathrm{GS}}$ is Galois-Suslin homology. By \cite[\S 7.1]{geissersuslin}, we have
$H_{i-1}^{\mathrm{GS}}(X,\Z/m)\cong H_{i}^{\WS}(X,\Z/m)$, showing the  first statement note that $H_{i}^{\WS}$ is denoted by $H_{i}^{ar}$ in \cite{geissersuslin}).

For the second statement, let $X'$ be a smooth, proper variety containing $X$ as a dense open subscheme.
Then, by \cite[Prop.\,6.2]{tho-alex}, we have
$
H_i^\rS(\bar X,\Z/p^r)\cong H_i^\rS(\bar X',\Z/p^r)
$
for $i=0,1$, and the exact sequence \eqref{hsss1} above implies that
\[
H_1^{\WS}(X,\Z/p^r )\cong H_1^{\WS}(X',\Z/p^r).
\]
Furthermore,
$H^1_t(X,\Z/p^r)\cong H^1_\et(X',\Z/p^r)$ by \cite[Prop.\,2.10]{tho-alex}.
Hence we may assume that $X=X'$  is smooth and proper.

Let $\Z^c_X$ be the complex of \'{e}tale sheaves on $X$ which associates to $U\to X$  the Bloch complex $z_0(U,\bullet)$. Then, for smooth proper $X$, $H^i_\et(X,\Z/p^r)$ is dual
to $\Ext^{2-i}_X(\Z/p^r, \Z^c_X)$ by \cite[Thm.\,5.1]{tho-duality}. Furthermore,
\[
\renewcommand{\arraystretch}{1.5}
\begin{array}{rcll}
\Ext^{2-i}_X(\Z/p^r, \Z^c_X)&\cong& \Ext^{1-i}_{X,\Z/p^r}(\Z/p^r,\Z^c_X/p^r) & \text{ (by \cite[Lem.\,2.4]{tho-duality}})\\
&\cong& H^{1-i}_\et(X,\Z^c_X/p^r)\\
&\cong & H^{1-i}\big(R\Gamma(\F,R\Gamma(\bar X_\et,\Z^c_{\bar X}/p^r))\big)\\
&\cong & H^{1-i}\big(R\Gamma(\F,\Z^c_{\bar X}/p^r(\bar X))\big) & \text{ (by \cite[Thm.\,3.1]{tho-duality})}.
\end{array}
\renewcommand{\arraystretch}{1}
\]
The natural map
\[
C_\bullet(\bar X) \otimes \Z/p^r \longrightarrow \Z^c_{\bar X}/p^r(\bar X)
\]
induces an isomorphism on $H^j$ for $j=-1,0$ by \cite[Thm.\,2.7]{schmidtspiess} (the assumption $\dim X \leq 2$ is unnecessary and not used in the proof). Hence for $i=1$ we obtain
\[
\renewcommand{\arraystretch}{1.5}
\begin{array}{rcll}
H^0\big(R\Gamma(\F,\Z^c_{\bar X}/p^r(\bar X))\big) &\cong &H^0\big(R\Gamma(\F,C_\bullet(\bar X)\otimes \Z/p^r)\big)\\
&\cong& H_1^\WS(X,\Z/p^r) &\text{(by Remark~\ref{wsalsabglfunktr})}.
\end{array}
\renewcommand{\arraystretch}{1}
\]
This concludes the proof.
\end{proof}

\section{The reciprocity map}
For any $X$ and any abelian group $A$, we construct a functorial pairing
\begin{equation}\label{pairing}
H_1^\WS(X,\Z) \ \times \ H^1_{\rW,t}(X,A) \stackrel{\langle \ , \ \rangle}{\longrightarrow} A,
\end{equation}
which induces natural maps
\[ \Hom(\pw(X)_\rW,A)\cong H^1_{\rW,t}(X,A)  \longrightarrow
\Hom(H_1^\WS(X,\Z),A) \]
for any abelian (pro-)group $A$, hence the Yoneda lemma induces
\begin{equation}\label{rec}
\rec_X:  H_1^\WS(X,\Z) \longrightarrow  \pw(X)_\rW .
\end{equation}

The pairing \eqref{pairing} should satisfy two conditions.  First of all,  the composite of\/ $\rec_X$ with the natural map $H_0^\rS(X,\Z) \to H_1^\WS(X,\Z)$ should be the map
\[
r_X: H_0^\rS(X,\Z)  \longrightarrow \Pi_1^{t,\ab}(X)_\rW
\]
which sends the class $[x]$ of a closed point $x\in X$ to its Frobenius automorphism in $\Pi_1^{t,\ab}(X)_\rW$.

Secondly, we want \eqref{pairing} to be compatible with the pairing over $\bar \F$ considered in \cite{tho-alex}:
 in loc.\ cit.\ we considered (over any algebraically closed field) a natural pairing
\begin{equation}\label{paarungoben}
H_1^\rS(\bar X,\Z)\ \times \ H^1_t(\bar X,A )\longrightarrow A,
\end{equation}
which is defined by pulling back torsors along finite correspondences. For $A=\Z/m$, $(m,\ch(\bar \F))=1$, the induced
homomorphism
\[
\Hom(H_1^\rS(\bar X,\Z),\Z/m) \longrightarrow H^1_\et(\bar X, \Z/m)
\]
coincides with the composite of
$\Hom(H_1^\rS(\bar X,\Z),\Z/m)\stackrel{can}{\hookrightarrow} H^1_\rS( \bar X,\Z/m)$ with the Suslin-Voevodsky comparison isomorphism (\cite[Cor.\,7.8]{susvoe})
\[
H^1_\rS( \bar X,\Z/m) \xrightarrow[\sim]{SV} H^1_\et(\bar X, \Z/m).
\]
We want to construct  the pairing \eqref{pairing} in such a way that the diagram
\[
\begin{tikzcd}[column sep=tiny]
H_1^\rS(\bar X,\Z)\dar &{\times} & H^1_{t}(\bar X,A)\ \arrow{rrrrr}{\eqref{paarungoben}} &&&&&\phantom{}&A\dar[equal]\\
H_1^\WS(X,\Z) &{\times} & H^1_{t}(X,A)\ \uar \arrow{rrrrr}{\eqref{pairing}} &&&&&\phantom{}& A
\end{tikzcd}
\]
commutes. In \cite[\S 4]{tho-alex} an explicit interpretation of \eqref{paarungoben} in terms of qfh-sheaves is given. This motivates the following construction of the pairing \eqref{pairing}.

\medskip\noindent
Let $A$ be an abelian group and $A\to \I^\bullet$ an injective
resolution of the constant sheaf $A$ in the category of qfh-sheaves. An element of $H^1_{\rW,t}(X,A)$ is represented
by a pair
\[
(\alpha,\beta)\in \I^1(\bar X)\oplus \I^0(\bar X),
\] with $d\alpha=0$, $[\alpha] \in H^1_t(\bar X,A)$ and $d\beta= \alpha-F^*\alpha$.
An element in $H_1^\WS(X,\Z)$ is represented by a pair
\[
(x,y) \in \Cor(\bar \Delta^1,\bar X) \oplus \Cor(\bar \Delta^0,\bar X),
\]
with $x\partial= F^{-1}yF -y  $.

\medskip
Since $H^1_t(\bar \Delta^1,A)=0= H^1_\et(\bar \Delta^0,A)$,  we can find $s\in  \I^0(\bar \Delta^1)$ with $ds=x^*F^*\alpha \in \I^1(\bar \Delta^1)$ and $t\in \I^0(\bar \Delta^0)$ with $dt=y^*\alpha \in \I^1(\bar\Delta^0)$.
Then
\begin{equation}
\langle(x,y),(\alpha,\beta)\rangle:= F^* t-t-\partial^*s + y^*\beta
\end{equation}
lies in
\[
A=H^0_\et(\bar \Delta^0,A)= \ker( \I^0(\bar \Delta^0) \stackrel{d}{\longrightarrow} \I^1(\bar \Delta^0)).
\]
Indeed, we have
\begin{multline*}
d(F^* t-t-\partial^*s + y^*\beta)=  F^*y^*\alpha - y^*\alpha - \partial^*x^*F^*\alpha  + y^*(\alpha-F^*\alpha)\\
= F^*y^*\alpha - (F^*y^*{{F^*}}^{-1}-y^*)(F^*\alpha)-y^*F^*\alpha=0.
\end{multline*}
One checks without difficulty that $\langle(x,y),(\alpha,\beta)\rangle$ does not depend on the choices of $s$ and $t$.

\begin{lemma}\label{pairingcorrect}
$\langle(x,y),(\alpha,\beta)\rangle\in A$ only depends on the class of\/ $(x,y)$ in $H_1^\WS(X,\Z)$ and on the class of\/ $(\alpha,\beta)$ in $H^1_{\rW,t}(X,A)$.
\end{lemma}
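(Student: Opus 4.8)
The plan is to first reduce the statement to two vanishing assertions. The formula for $\langle(x,y),(\alpha,\beta)\rangle$ is bilinear once one chooses the auxiliary primitives $s$ and $t$ additively, and it is already known to be independent of those choices. Hence it suffices to show that the pairing vanishes when $(\alpha,\beta)$ is a coboundary in the Weil-\'{e}tale total complex and $(x,y)$ is an arbitrary Weil-Suslin cycle, and symmetrically that it vanishes when $(x,y)$ is a boundary in the Weil-Suslin total complex and $(\alpha,\beta)$ is an arbitrary Weil-tame cocycle. Throughout I would use four elementary inputs: the cocycle condition $d\beta=\alpha-F^*\alpha$; the cycle condition $x\partial = F^{-1}yF-y$; the relations $\partial^*\partial^*=0$ and $F^*\partial^*=\partial^*F^*$ (the latter because the face maps are defined over $\F$); and, crucially, the Frobenius compatibility extracted from Lemma~\ref{galact}, namely $c\circ\Phi=\Phi\circ(F^{-1}cF)$ for the Frobenius $\Phi$, which on cohomology gives $F^*(c^*\omega)=(F^{-1}cF)^*(F^*\omega)$ for a correspondence $c$ and a class $\omega$ on $\bar X$.

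For the coboundary case I would write $(\alpha,\beta)=(d\gamma,\gamma-F^*\gamma)$ and exploit the freedom in the auxiliary primitives by taking $s=x^*F^*\gamma$ and $t=y^*\gamma$, which solve the defining equations since $F^*$ and pull-back along correspondences commute with $d$. Substituting and cancelling the manifest terms reduces the pairing to $F^*(y^*\gamma)-\partial^*x^*F^*\gamma-y^*F^*\gamma$. Applying the Frobenius compatibility to the first summand and rewriting $\partial^*x^*=(x\partial)^*=(F^{-1}yF)^*-y^*$ via the cycle condition, the three terms cancel identically, giving zero.

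For the boundary case I would write $(x,y)=(u\partial+v-F^{-1}vF,\,-v\partial)$. The key device is to solve the auxiliary equations one simplicial degree higher: choose $p$ on $\bar\Delta^2$ with $dp=u^*F^*\alpha$ and $q,q'$ on $\bar\Delta^1$ with $dq=v^*\alpha$ and $dq'=v^*F^*\alpha$. This is legitimate because the tame cohomology $H^1_t(\bar\Delta^i,A)$ of affine space vanishes and because the pull-backs of the (Frobenius-twisted) tame class $\alpha$ remain tame. One then checks that $s=\partial^*p+q'-F^*q$ and $t=-\partial^*q$ solve the defining equations for the boundary $(x,y)$, using the compatibility to identify $(F^{-1}vF)^*F^*\alpha=F^*(v^*\alpha)$. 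Feeding these into the formula, using $\partial^*\partial^*=0$ and $F^*\partial^*=\partial^*F^*$, the pairing collapses to $\partial^*(q-q'-v^*\beta)$. By the cocycle condition this combination is $d$-closed, hence a constant section of $A$ over the connected scheme $\bar\Delta^1$; since $\partial=\delta^0-\delta^1$ in this degree, $\partial^*$ annihilates constants and the pairing is zero.

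I expect the boundary case to be the main obstacle. The difficulty is not conceptual but organizational: one must arrange the auxiliary primitives $p,q,q'$ on the higher simplices so that the $\partial^*$-telescoping is exact, and one must keep the two distinct Frobenius pull-backs --- on $\bar X$ and on the simplices $\bar\Delta^i$ --- rigorously separated while invoking the compatibility of Lemma~\ref{galact}. The only genuinely non-formal ingredient is the solvability of these auxiliary equations, which rests on the vanishing $H^1_t(\bar\Delta^i,A)=0$ together with the preservation of tameness under pull-back along finite correspondences; I would state this explicitly before beginning the computation.
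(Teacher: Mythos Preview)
Your proposal is correct and follows essentially the same strategy as the paper: reduce to the two vanishing statements (cohomological coboundary, homological boundary), and in each case make a judicious choice of the auxiliary primitives so that the defining expression telescopes. Your coboundary case is verbatim the paper's argument.

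The only difference is in the boundary case. The paper chooses a single primitive $\tau$ on $\bar\Delta^1$ with $d\tau=-v^*\alpha$ and then incorporates $v^*\beta$ directly into $s$, taking $s=\partial^*\sigma+F^*\tau-\tau-v^*\beta$ and $t=\partial^*\tau$; with this choice the expression $F^*t-t-\partial^*s-(v\partial)^*\beta$ collapses to $0$ immediately after using $\partial^*\partial^*=0$ and $F^*\partial^*=\partial^*F^*$. You instead introduce an extra primitive $q'$ with $dq'=v^*F^*\alpha$ and leave $v^*\beta$ out of $s$, which produces the residual term $\partial^*(q-q'-v^*\beta)$ and then requires the additional (but easy) observation that this element is a constant in $A\subset \I^0(\bar\Delta^1)$ and hence is annihilated by $\partial^*=(\delta^0)^*-(\delta^1)^*$. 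The two choices are related by $q'\equiv q-v^*\beta$ modulo constants, so your extra step is exactly the price of the extra degree of freedom; the paper's choice is marginally slicker, but your version is entirely sound.
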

\begin{proof}
For $\gamma\in \I^0(\bar X)$,  $s=x^*F^*\gamma$ and $t= y^*\gamma$ satisfy the condition and we have
\begin{multline*}
\langle(x,y),(d\gamma,\gamma-F^*\gamma)\rangle= F^* t-t-\partial^*s + y^*(\gamma-F^*\gamma)\\
= F^*y^*\gamma-y^*\gamma - (x\delta)^*F^*\gamma= F^*y^*\gamma-y^*\gamma - F^*y^*\gamma + y^*F^*\gamma=0.
\end{multline*}
Let $(u,v) \in \Cor(\bar \Delta^2,\bar X) \oplus \Cor(\bar \Delta^1,\bar X)$. Since $H^1_t(\bar \Delta^2,A)=0=H^1_t(\bar \Delta^1,A)$, we can find $\sigma\in \I^0(\bar \Delta^2)$ with $d\sigma = u^*F^*\alpha$ and $\tau\in \I^0(\bar\Delta^1)$ with $d\tau= -v^*\alpha$. Then $s=\partial^*\sigma+F^*\tau -\tau - v^*\beta$ and $t=\partial^*\tau$ satisfy
\[
ds= (u\partial+v-F^{-1}vF) F^*\alpha, \quad dt=(-v\partial)^*\alpha
\]
 and
\begin{multline*}
\langle(u\partial + v-F^{-1}vF,-v\partial  ), (\alpha,\beta)\rangle= F^*t-t-\partial^*s - (v\partial)^* \beta=\\
F^*\partial^*\tau-\partial^*\tau - \partial^*(\partial+\sigma^*F^*\tau-\tau-v^*\beta) -\partial^*v^*\beta =0.
\end{multline*}
\end{proof}

By Lemma~\ref{pairingcorrect}, we obtain the pairing \eqref{pairing} and the reciprocity map \eqref{rec}.

\begin{lemma}
If $f: X\to Y$  is an $\F$-morphism, then the diagram
\[
\begin{tikzcd}[column sep=tiny]
H_1^\WS(X,\Z)\dar[swap]{f_*} &{\times} & H^1_{\rW,t}(X,A)\ \arrow{rrrrr}{\langle\ ,\ \rangle_X} &&&&&\phantom{}& A\dar[equal]\\
H_1^\WS(Y,\Z) &{\times} & H^1_{\rW,t}(Y,A)\ \uar{f^*} \arrow{rrrrr}{\langle\ ,\ \rangle_Y} &&&&&\phantom{}& A
\end{tikzcd}
\]
commutes, hence $\rec_X: H_1^\WS(X,\Z) \to \Pi_1^{t,\ab}(X)_\rW$ is functorial in $X$.
\end{lemma}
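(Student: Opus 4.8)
The plan is to prove the commutativity by a direct chain-level computation from the explicit formula for the pairing, after which the functoriality of $\rec_X$ follows formally from the Yoneda description \eqref{rec}. On the cohomological side, $f^*\colon H^1_{\rW,t}(Y,A)\to H^1_{\rW,t}(X,A)$ is induced by the pull-back $\bar f^*\colon\I^\bullet(\bar Y)\to\I^\bullet(\bar X)$ along $\bar f=f\times_\F\bar\F$, sending a representative $(\alpha,\beta)$ to $(\bar f^*\alpha,\bar f^*\beta)$. On the homological side, $f_*\colon H_1^\WS(X,\Z)\to H_1^\WS(Y,\Z)$ is induced by the map $C_\bullet(\bar X)\to C_\bullet(\bar Y)$ of Suslin complexes that composes a finite correspondence with the graph $\Gamma_{\bar f}$, so that $(x,y)$ is sent to $(x',y')=(\Gamma_{\bar f}\circ x,\Gamma_{\bar f}\circ y)$. (That this is a chain map of the Weil-Suslin double complex, i.e.\ that it commutes with $1-F^*$, is a routine consequence of Lemma~\ref{galact} and the fact that $f$ is defined over $\F$.)

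First I would isolate the two compatibilities on which the computation rests. The first is that the pull-back of qfh-cohomology classes along a finite correspondence, as constructed in \cite[\S4]{tho-alex}, is functorial for composition: for any correspondence $c$ into $\bar X$ one has $(\Gamma_{\bar f}\circ c)^*=c^*\circ\bar f^*$ as operations on the resolution $\I^\bullet$. The second is that, because $f$ is an $\F$-morphism, $\bar f$ commutes with Frobenius, $\bar f\circ F_X=F_Y\circ\bar f$, and hence $\bar f^*\circ F^*=F^*\circ\bar f^*$ on $\I^\bullet$.

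Granting these, the computation is immediate. Pick auxiliary elements $s'\in\I^0(\bar\Delta^1)$ and $t'\in\I^0(\bar\Delta^0)$ computing $\langle f_*(x,y),(\alpha,\beta)\rangle_Y$, so that $ds'=(x')^*F^*\alpha$ and $dt'=(y')^*\alpha$. Applying the first compatibility and then the second gives $(x')^*F^*\alpha=x^*\bar f^*F^*\alpha=x^*F^*(\bar f^*\alpha)$ and $(y')^*\alpha=y^*(\bar f^*\alpha)$, so the very same $s'$ and $t'$ satisfy the equations defining the pairing $\langle(x,y),(\bar f^*\alpha,\bar f^*\beta)\rangle_X=\langle(x,y),f^*(\alpha,\beta)\rangle_X$. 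Using $(y')^*\beta=y^*(\bar f^*\beta)$ once more, the two values coincide term by term:
\[
\langle f_*(x,y),(\alpha,\beta)\rangle_Y=F^*t'-t'-\partial^*s'+(y')^*\beta=F^*t'-t'-\partial^*s'+y^*\bar f^*\beta=\langle(x,y),f^*(\alpha,\beta)\rangle_X.
\]
The main obstacle is the first compatibility: one must verify that the chain-level pull-back along correspondences used to define the pairing genuinely respects composition with $\Gamma_{\bar f}$, which is precisely the functoriality of the contravariant action of finite correspondences on qfh-cohomology from \cite[\S4]{tho-alex}. The remaining points---the Frobenius compatibility and that $f_*$ is a chain map---are straightforward given that $f$ is defined over $\F$.
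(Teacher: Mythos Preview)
Your argument is correct and essentially identical to the paper's: both compute $f_*(x,y)$ as $(f\circ x,f\circ y)$, observe that the same auxiliary $s,t$ serve to evaluate both pairings, and conclude by a term-by-term comparison using $(f\circ y)^*\beta=y^*f^*\beta$. You are slightly more explicit than the paper in isolating the two underlying compatibilities (functoriality of correspondence pull-back and Frobenius-equivariance of $\bar f^*$), but the logic is the same.
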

\begin{proof}
Let $A\to \I^\bullet$ an injective resolution of the constant sheaf $A$ in the category of qfh-sheaves. Let $c\in H^1_{\rW,t}(Y,A)$ be represented
by
\[
(\alpha,\beta)\in \I^1(\bar Y)\oplus \I^0(\bar Y),
\]
with $d\alpha=0$, $[\alpha] \in H^1_t(\bar Y,A)$ and $d\beta= \alpha-F^*\alpha$. Furthermore, let $\zeta \in H_1^\WS(X,\Z)$ be represented by
\[
(x,y) \in \Cor(\bar \Delta^1,\bar X) \oplus \Cor(\bar \Delta^0,\bar X),
\]
with $x\partial= F^{-1}yF -y  $.
We have to show that
$
\langle \zeta , f^*(c)\rangle_X = \langle f_*(\zeta), c \rangle_Y
$.
This follows directly from the construction: First note that
$f_*(\zeta)$ is represented by $(f\circ x, f\circ y)$.
Choose $s\in  \I^0(\bar \Delta^1)$ with
\[
ds=(f\circ x)^*F^*\alpha \in \I^1(\bar \Delta^1)
\]
and $t\in \I^0(\bar \Delta^0)$ with $dt=(f\circ y)^*\alpha \in \I^1(\bar\Delta^0)$.
Then
\begin{equation*}
\langle (f\circ x, f\circ y),(\alpha,\beta)\rangle_Y= F^* t-t-\partial^*s + (f\circ y)^*\beta
\end{equation*}
by definition. Since $ds= x^*F^*(f^*(\alpha))$, $dt= y^*(f^*(\alpha))$,
we obtain
\[
\begin{array}{rcl}
\langle (x,y), f^*(\alpha,\beta)\rangle_X&=& F^* t-t-\partial^*s + y^*(f^*(\beta))\\
&=& F^* t-t-\partial^*s + (f\circ y)^*\beta\\
&=&\langle (f\circ x, f\circ y),(\alpha,\beta)\rangle_Y,
\end{array}
\]
showing the assertion.
\end{proof}

\begin{proposition}\label{pointcase}
The composite of\/ $\rec_X$ with the natural map $H_0^\rS(X,\Z) \to\break H_1^\WS(X,\Z)$ is the map
\[
r_X: H_0^\rS(X,\Z)  \longrightarrow \Pi_1^{t,\ab}(X)_\rW
\]
which sends the class $[x]$ of a closed point $x\in X$ to its Frobenius automorphism in $\Pi_1^{t,\ab}(X)_\rW$.
\end{proposition}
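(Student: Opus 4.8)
The plan is to exploit that $\rec_X$ is defined from the pairing \eqref{pairing} via the Yoneda lemma, so that the assertion can be tested after pairing with an arbitrary class. Write $z_x\in H_1^\WS(X,\Z)$ for the image of $[x]$ under the natural map \eqref{sustows}. By the tame version of Proposition~\ref{homvergleich} we identify $H^1_{\rW,t}(X,A)\cong\Hom(\pw(X)_\rW,A)$, and the desired equality $\rec_X(z_x)=\Frob_x$ becomes the statement $\langle z_x,c\rangle=\phi_c(\Frob_x)$ for every abelian group $A$ and every $c\in H^1_{\rW,t}(X,A)$, where $\phi_c$ denotes the homomorphism corresponding to $c$. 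Since both maps in question are additive and $H_0^\rS(X,\Z)$ is generated by classes of closed points, it suffices to treat a single closed point $x$.

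Next I would reduce to the case of a base point. Let $\iota:\Spec k(x)\to X$ be the canonical morphism. Then $z_x=\iota_*(z)$, where $z$ is the image under \eqref{sustows} of the fundamental class of $\Spec k(x)$, and by definition $\Frob_x=\iota_*(\Frob_{k(x)})$. Using the functoriality of the pairing established in the lemma preceding this proposition, together with the relation $\phi_c(\iota_*\gamma)=\phi_{\iota^*c}(\gamma)$ coming from the functoriality of Proposition~\ref{homvergleich}, the claim for $(X,x)$ follows from the claim $\langle z,c'\rangle=\phi_{c'}(\Frob_{k(x)})$ on $Y=\Spec k(x)$. By Remark~\ref{basefieldchange} the Weil--Suslin side is unchanged if we regard $Y$ as a scheme over its own residue field $k(x)$, and $\pw(Y)_\rW\cong\Z$ together with its Frobenius generator is compatible with this base change, so I may assume that $x$ is $\F$-rational, i.e.\ $Y=\Spec\F$.

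On $Y=\Spec\F$ the computation is explicit. Here $\bar Y=\Spec\bar\F$, and by Lemma~\ref{galact} the fundamental class is represented in the complex \eqref{part0} by the pair $(0,\bar y)$ with $\bar y\in\Cor(\bar\Delta^0,\bar Y)$ the Frobenius-invariant class of the geometric point; a class $c'$ is represented by $(\alpha,\beta)$. Since $H^1_\et(\bar Y,A)=0$ I may arrange $\alpha=0$, so that $\beta=\beta'\in H^0_\et(\bar Y,A)=A$. In the pairing formula $F^*t-t-\partial^*s+\bar y^*\beta$ the term $s$ then satisfies $ds=x^*F^*\alpha=0$, hence is constant, giving $\partial^*s=0$; likewise $dt=\bar y^*\alpha=0$, so $t$ is constant and $F^*t-t=0$. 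Thus $\langle z,c'\rangle=\bar y^*\beta'=\beta'$, which is exactly the representative of the class of $c'$ under the identification $H^1_\rW(\Spec\F,A)=A_G=A$ coming from \eqref{hsss}. For $\Spec\F$ the isomorphism of Proposition~\ref{homvergleich} is the tautological identification of $H^1(\Z,A)=A$ with $A_G$ given by evaluating a homomorphism at the Frobenius generator $F$ of $G$, so this $\beta'$ is $\phi_{c'}(\Frob)$, as required.

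The main obstacle is the bookkeeping of signs and of the direction of Frobenius in this last identification. I would check, using Lemma~\ref{galact} and the sign conventions of the double complexes \eqref{cohdiff} and \eqref{part0}, that the $G$-action on $H^0_\et(\bar X,A)$ entering \eqref{hsss1} is matched with the one entering \eqref{hsss} so that the map of Proposition~\ref{homvergleich} carries the class computed above to the homomorphism $\phi$ with $\phi(\Frob)=\beta'$ \emph{with the correct sign}; this is precisely what pins down that the fundamental class is sent to $+\Frob_x$ rather than to its inverse. A secondary but genuine point is to verify that $\Frob_x$ and the isomorphism $\pw(Y)_\rW\cong\Z$ are compatible with the base change from $\F$ to $k(x)$ used in the reduction, so that the normalization established for the $\F$-rational case transports back to a point of arbitrary degree (recall $\Frob_x$ projects to $\deg(x)$ times the generator of $G$).
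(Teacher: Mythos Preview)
Your proposal is correct and follows essentially the same approach as the paper: reduce by functoriality of the pairing to the case of a point over the base field, and there compute the pairing explicitly using the identifications $H_1^\WS(\Spec\F,\Z)\cong\Z$ and $H^1_{\rW,t}(\Spec\F,A)\cong A$. The paper's proof is terser---it simply says ``by functoriality, it suffices to consider the case $X=\Delta^0$'' and then asserts that the pairing becomes multiplication $\Z\times A\to A$---whereas you spell out the intermediate step through $\Spec k(x)$, invoke Remark~\ref{basefieldchange} to pass to an $\F$-rational point, and work through the pairing formula term by term; your explicit acknowledgment of the sign and base-change compatibilities is a point the paper leaves implicit.
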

\begin{proof} By functoriality, it suffices to consider the case $X=\Delta^0$.  In this case,
we have natural identifications
$\Z=H_0^\rS(\Delta^0,\Z)= H_1^\WS(\Delta^0,\Z)$ (sending $1\in \Z$ to  $\id_{\Delta^0}\in \Cor(\Delta^0,\Delta^0)$), and for any abelian group $A$, we have $
A= H^0_\et(\bar \Delta^0,A)_G = H_{\rW,t}^1(\Delta^0,A)$.
With respect to these identifications, the pairing~\eqref{pairing} is just multiplication $\Z \times A\to A$, $(n,a)\mapsto na$. Furthermore, the isomorphism of Proposition~\ref{homvergleich}
\[
A= H_{\rW,t}^1(\Delta^0,A) \stackrel{\sim}{\longrightarrow} \Hom(\Pi_1^{t,\ab}(\Delta^0)_\rW,A)=  \Hom(G,A)
\]
maps $a\in A$ to the homomorphism $G\to A$, which sends the Frobenius $F\in G\cong\Z$ to $a$. Using all this, the statement of the proposition follows from the definition of the reciprocity map.
\end{proof}

\section{Comparison of blow-up sequences}

\medskip
If $A=\Z/m$, then Weil-etale and etale cohomology agree, and by Corollary~\ref{sushomdivcor}, the pairing \eqref{pairing} induces a pairing
\begin{equation}
\label{pairingmodm}
H_1^\WS(X,\Z/m) \times H^1_{t}(X,\Z/m) \longrightarrow \Z/m,
\end{equation}
and hence a map
\begin{equation}
\Phi^1_X: H_1^\WS(X,\Z/m) \longrightarrow H^1_{t}(X,\Z/m)^*,
\end{equation}
which is the $\bmod$ $m$-version of $\rec_X$.

\medskip
In addition, we consider the pairing
\begin{equation}
H_0^\WS(X,\Z/m) \times H^0_{\et}(X,\Z/m) \longrightarrow \Z/m
\end{equation}
defined as follows: Choose $x\in \Cor(\bar \Delta^0, \bar X)$ representing a class in $H_0^\WS(X,\Z/m)\allowbreak = H_0^\rS(\bar X,\Z/m)_G$ and $\alpha \in I^0(\bar X)$ with $d\alpha=0$ and $\alpha -F^*\alpha=0$. Then put $\langle x,\alpha\rangle=x^*\alpha \in H^0_\et(\bar \Delta^0,\Z/m) = \Z/m$.
We obtain a map
\begin{equation}
\Phi_X^0: H_0^\WS(X,\Z/m) \longrightarrow H^0_\et(X,\Z/m)^*.
\end{equation}
The maps $\Phi^0$ and $\Phi^1$ extend in a natural way to non-connected schemes. They induce a map from the exact sequence of Proposition \ref{blowupseq} to the dual of the exact sequence of Proposition~\ref{blowcohom}. The
compatibility with the boundary map is given by

\begin{proposition} \label{anticommuting}
For any abstract blow-up square
\[
\begin{tikzcd}
Z'\rar{i'}\dar[swap]{\pi'}&X'\dar{\pi}\\
Z\rar{i}& X
\end{tikzcd}
\]
the following diagram is commutative
\[
\begin{tikzcd}
H_1^\WS(X,\Z/m)\rar{\delta}\dar[swap]{\Phi_X^1}& H_{0}^\WS(Z',\Z/m)\dar{\Phi_{Z'}^0}\\
H^1_t(X,\Z/m)^*\rar{-\delta^*} &\ H^{0}_\et(Z',\Z/m)^*.
\end{tikzcd}
\]
Here $\delta$ is the boundary map of the exact sequence of Proposition \ref{blowupseq} and $\delta^*$ is the dual of the boundary map of the exact sequence of Proposition~\ref{blowcohom}.
\end{proposition}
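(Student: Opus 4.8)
The plan is to reduce the asserted commutativity to a single adjunction identity between the two boundary maps and the two pairings, and then to verify that identity by choosing the auxiliary cochains in the definition of the degree-$1$ pairing to be correspondence-pullbacks of the cochains that compute the cohomological boundary.

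Unwinding the definitions of $\Phi^0$, $\Phi^1$ and of $\delta^*$, the square commutes if and only if
\[
\langle \delta\zeta,\gamma\rangle_{Z'} = -\langle\zeta,\delta\gamma\rangle_X
\]
for all $\zeta\in H_1^\WS(X,\Z/m)$ and $\gamma\in H^0_\et(Z',\Z/m)$, where on the left $\delta$ is the homological boundary of Proposition~\ref{blowupseq} and on the right $\delta$ is the cohomological boundary of Proposition~\ref{blowcohom}. Both boundaries originate from the abstract blow-up short exact sequence of $h$-sheaves \eqref{htria}: the homological one after applying $C_\bullet(-)$ and coning $1-F^*$, the cohomological one after applying $\Hom_h(-,\I^\bullet)$ (which, since the $\I^j$ are injective, yields the honest short exact sequence of complexes $0\to\I^\bullet(\bar X)\to\I^\bullet(\bar X')\oplus\I^\bullet(\bar Z)\to\I^\bullet(\bar Z')\to 0$) and again coning $1-F^*$. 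Because the pairing \eqref{pairing} is itself induced by evaluating correspondences on these cochains, the identity is an instance of the general principle that connecting homomorphisms are adjoint up to sign with respect to a pairing of short exact sequences; the work is to make this explicit and to pin down the sign.

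To do so I would fix representatives. Write $\bar W=\bar X'\amalg\bar Z$, let $p\colon\bar W\to\bar X$ be the induced proper map and $\iota\colon\bar Z'\to\bar W$. Represent $\zeta$ by a pair $(x,y)$ with $x\partial=F^{-1}yF-y$; using that the cokernel complex $K_\bullet$ is acyclic in degrees $\le 2$, replace $(x,y)$ by a homologous cycle lying in the image of $C_\bullet(\bar W)\to C_\bullet(\bar X)$ and lift it to $(\tilde x,\tilde y)\in C_1(\bar W)\oplus C_0(\bar W)$ with $p\circ\tilde x=x$, $p\circ\tilde y=y$. Applying the Weil-Suslin differential \eqref{part0} to $(\tilde x,\tilde y)$ produces an element of $C_0(\bar W)$ whose image in $C_0(\bar X)$ vanishes, hence equals $\iota_* w$ for a unique $w\in C_0(\bar Z')$; then $\delta\zeta=[w]$ and $\langle\delta\zeta,\gamma\rangle_{Z'}=w^*\gamma$. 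On the cohomological side, represent $\gamma$ by a cocycle $\gamma\in\I^0(\bar Z')$ with $F^*\gamma=\gamma$, lift it to $\Gamma\in\I^0(\bar W)$, and define $\alpha\in\I^1(\bar X)$ by $p^*\alpha=d\Gamma$ and $\beta\in\I^0(\bar X)$ by $p^*\beta=\Gamma-F^*\Gamma$; then $d\beta=\alpha-F^*\alpha$ and $(\alpha,\beta)$ represents $\delta\gamma\in H^1_{\rW,t}(X,\Z/m)$. Now in the degree-$1$ pairing $\langle(x,y),(\alpha,\beta)\rangle=F^*t-t-\partial^*s+y^*\beta$ I would take the permitted auxiliary cochains to be $t:=\tilde y^*\Gamma$ and $s:=\tilde x^*F^*\Gamma$; these satisfy $dt=\tilde y^*p^*\alpha=y^*\alpha$ and $ds=\tilde x^*p^*F^*\alpha=x^*F^*\alpha$, using that $p^*$ commutes with $d$ and with $F^*$. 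Substituting, rewriting $y^*\beta=\tilde y^*p^*\beta=\tilde y^*(\Gamma-F^*\Gamma)$, and expanding $w^*\gamma=(\tilde x\partial+\tilde y-F^{-1}\tilde y F)^*\Gamma$ by means of $(\iota_* w)^*\Gamma=w^*(\iota^*\Gamma)=w^*\gamma$, the contravariance $(fg)^*=g^*f^*$, and the Frobenius identity of Lemma~\ref{galact} applied to $(F^{-1}\tilde y F)^*$, the two expressions agree after transposing all terms to one side, the overall sign $-1$ being produced by the negative sign on the top row of the double complex in \eqref{part0} (equivalently, the cone convention \eqref{cohdiff}). Independence of all choices is already recorded in Lemma~\ref{pairingcorrect}, so the identity descends to homology and cohomology classes.

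The main obstacle is bookkeeping rather than anything conceptual, concentrated in two places. First, the homological boundary is defined across the four-term sequence involving the complex $K_\bullet$, which is only acyclic in low degrees rather than zero, so one must justify replacing $(x,y)$ by a representative in the image of $C_\bullet(\bar W)$ before lifting and check that the resulting class of $w$ is well defined. Second, and more delicately, the Frobenius twists $F^{-1}(-)F$ on correspondences interact nontrivially with the pullback $F^*$ on cochains through Lemma~\ref{galact}; keeping these straight is exactly what forces the appearance of the $y^*\beta$-term (which arises precisely from the failure of the chosen lift $\Gamma$ to be $F^*$-invariant) and what fixes the sign. Once the representatives above are fixed, however, the verification is a direct, if careful, cochain computation.
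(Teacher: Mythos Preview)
Your proposal is correct and follows essentially the same route as the paper's own proof: both reduce to the adjunction identity $\langle\delta\zeta,\gamma\rangle_{Z'}=-\langle\zeta,\delta\gamma\rangle_X$, lift the homology cycle to $\bar X'\amalg\bar Z$ and the cohomology class from $\bar Z'$ to $\bar X'\amalg\bar Z$, and then exploit the freedom in the auxiliary cochains $s,t$ by taking them to be correspondence-pullbacks of the lifted cochain (your $\Gamma$, the paper's $\hat\beta$), after which the identity is a direct unwinding. Your packaging via $\bar W=\bar X'\amalg\bar Z$ and the explicit choices $t=\tilde y^*\Gamma$, $s=\tilde x^*F^*\Gamma$ make the mechanism a bit more transparent than the paper's compressed notation $\hat\alpha^*(\hat\beta)$, but there is no substantive difference in strategy.
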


\begin{proof}
We have to show that the diagram
\[
\begin{tikzcd}[column sep=tiny]
H_1^\WS(X,\Z/m)\dar[swap]{\delta} &{\times} & H^1_{t}(X,\Z/m)\ \arrow{rrrrr}{\langle\ ,\ \rangle} &&&&&\phantom{}&\Z/m\dar[equal]\\
H_0^\WS(Z',\Z/m) &{\times} & H^0_{\et}(Z',\Z/m)\ \uar{-\delta} \arrow{rrrrr}{\langle\ ,\ \rangle} &&&&&\phantom{}& \Z/m
\end{tikzcd}
\]
commutes.  Let $a\in H_1^\WS(X,\Z/m)$ and $b\in H^0_\et(Z',\Z/m)$. We put
\[
C_i^\rW(X)=C_i(\bar X) \otimes \Z/m \, \oplus \, C_{i-1}(\bar X)\otimes \Z/m,
\]
i.e., $C_\bullet^\rW(X)$ is the complex calculating $H_\bullet^{\WS}(X,\Z/m)$. Consider the diagram
\[
\begin{tikzcd}[column sep=large, row sep=large]
0\rar& C_1^\rW(Z')\rar{(i'_*,-\pi'_*)} \dar{(\partial^*,1-F^*)}&C_1^\rW(X') \oplus C_1^\rW(Z)\rar{(\pi_* , i_*)} \dar{(\partial^*,1-F^*)}& C_1^\rW(X)\dar{(\partial^*,1-F^*)}\\
0\rar& C_0^\rW(Z')\rar{(i'_*,-\pi'_*)}&C_0^\rW(X') \oplus C_0^\rW(Z)\rar{(\pi_* , i_*)} & C_0^\rW(X) .
\end{tikzcd}
\]
By Proposition~\ref{blowupseq} and its proof,  $a\in H_1^\WS(X,\Z/m)$ can be represented by a cocycle $\alpha\in C_1^\rW(X)$ which can be lifted to
$C_1^\rW(X') \oplus C_1^\rW(Z)$.
We choose $\hat \alpha  \in C_1^\rW(X') \oplus C_1^\rW(Z)$ with $(\pi_*,i_*)(\hat \alpha)=\alpha$, hence
$(\pi_*,i_*)(\partial^*,1-F^*)(\hat \alpha)=0$.  We conclude that
 $\delta(a)\in H_0^\WS(Z',\Z/m)$ is represented by an element $\gamma\in C_0^\rW(Z')$ with
\begin{equation}
(i'_*,-\pi'_*)(\gamma)= (\partial^*,1-F^*)(\hat \alpha).
\end{equation}
Let $\I^\bullet$  be an injective resolution of $\Z/m$ in the category of sheaves of
$\Z/m$-modules on the $h$-site on $\Sch/\F$, and let $\beta \in \I^{0}(Z')$, $d\beta=0$,
be a representative of $b\in H^{0}_\et(Z',\Z/m)$.
Consider the diagram
\[
\begin{tikzcd}[column sep=small]
\phantom{x}&&\I^0(X')\oplus \I^0(Z)\arrow[bend right]{lldd}[swap]{\widehat\alpha^*} \dar{d}\rar{(i'^*,-\pi'^*)}&\I^0(Z')\dar{d}\\
&\I^1(X) \arrow[hookrightarrow]{r}{(\pi^*,i^*)}\dar{\alpha^*}& \I^1(X')\oplus \I^1(Z)\rar{(i'^*,-\pi'^*)} \dar{\widehat\alpha^*} &\I^1(Z')\\
\I^0(\bar \Delta^1)\oplus \I^0(\bar \Delta^0)\rar{d}\dar{(\partial^*, 1-F^*)}&\I^1(\bar \Delta^1)\oplus \I^1(\bar \Delta^0) \dar{(\partial^*, 1-F^*)}\arrow[dash,yshift=0.3ex]{r}\arrow[dash,yshift=-0.3ex]{r} &\I^1(\bar \Delta^1)\oplus \I^1(\bar \Delta^0) \dar{(\partial^*, 1-F^*)}\rar{d}&\I^2(\bar \Delta^1)\oplus \I^2(\bar \Delta^0) \dar{(\partial^*, 1-F^*)}\\
\I^0(\bar \Delta^0)\rar{d}&\I^1(\bar\Delta^0)\arrow[dash,yshift=0.3ex]{r}\arrow[dash,yshift=-0.3ex]{r}&\I^1(\bar \Delta^0)\rar{d}&\I^2(\bar \Delta^0).
\end{tikzcd}
\]
Since the complex $\coker\big(\I^\bullet(X')\oplus \I^\bullet(Z)\to \I^\bullet(Z')\big)$ is exact (cf.\ the exact triangle \eqref{eqtriang} in the proof of Proposition~\ref{blowcohom}), we find $\hat \beta\in \I^0(X')\oplus \I^0(Z)$ with $({i'}^*,-{\pi'}^*)(\hat\beta)=\beta$.
By the argument of \cite[Lem.\,12.7]{MVW}, the sequence
\[
0\to \f(X) \to \f(X')\oplus \f(Z) \to \f(Z')
\]
is exact for every $h$-sheaf $\f$. Therefore the second line in the diagram is exact and there  is a unique $\varepsilon \in \I^1(X)$ with $(\pi^*,i^*)(\varepsilon)=d\hat \beta$ representing $\delta(b)\in H^1_t(X,\Z/m)$.
From $\hat \alpha^*(d\hat \beta)=\alpha^*(d\varepsilon)$ it follows that
\[
d(\hat\alpha^*(\hat \beta))=\alpha^*(\varepsilon) \in \ker(\partial^*, 1-F^*).
\]
By definition, we have
\[
\langle a, \delta b\rangle = -(\partial^*,1-F^*)\hat\alpha^*(\hat \beta) \in \ker(\I^0(\bar \Delta^0)\to \I^1(\bar \Delta^0))=H^0_\et(\bar\Delta^0,\Z/m).
\]
On the other hand, $\langle \delta a, b \rangle = \gamma^*(b)\in H^0_\et(\bar \Delta^0,\Z/m)$ is represented by $\gamma^*\beta\in \I^0(\Delta^0)$ and
\[
\gamma^*\beta=\gamma^*({i'}^*,-{\pi'}^*)(\hat \beta)= \big(i'_*(\gamma),-\pi'_*(\gamma)\big)^*(\hat \beta)=\big( (\partial^*,1-F^*)(\hat\alpha)\big)^*(\hat \beta).
\]
Now we write $\hat \alpha=(\hat \alpha_1,\hat \alpha_2)$ with $\hat \alpha_1 \in C_1(\bar X')\oplus C_1(\bar Z)$ and $\hat \alpha_2 \in C_0(\bar X')\oplus C_0(\bar Z)$.

Then $(\partial^*,1-F^*)(\hat\alpha)= \alpha_1 \partial + \alpha_2 - F^{-1}\alpha_2F$. Since $F^* (\hat \beta)=\hat \beta$, we conclude
\[
\big( (\partial^*,1-F^*)(\hat\alpha)\big)^*(\hat \beta)= (\delta^*,1-F^*)\hat \alpha^*(\hat \beta).
\]
This completes the proof.
\end{proof}

\section{Proof of the main theorem}
To prove our main theorem, we first consider finite coefficients.

\begin{proposition}
The map $\Phi_X^0$ is an isomorphism for any $X$ and $m$.
\end{proposition}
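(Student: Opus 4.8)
The plan is to reduce to the case of a connected $X$ and there identify the pairing \eqref{pairingmodm} in degree $0$ with the standard perfect pairing $\Z/m \times \Z/m \to \Z/m$, $(n,a)\mapsto na$.

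First I would reduce to connected $X$. Writing $X=\amalg_i X_i$ as the disjoint union of its connected components, the Suslin complex is the direct sum of the $C_\bullet(\bar X_i)$, so $H_0^\WS(X,\Z/m)=\bigoplus_i H_0^\WS(X_i,\Z/m)$, while $H^0_\et(X,\Z/m)=\prod_i H^0_\et(X_i,\Z/m)$. A cycle $x\in\Cor(\bar\Delta^0,\bar X)$ supported on the components lying over $X_i$ pairs trivially with a section $\alpha$ supported over $X_j$ for $j\neq i$, since $\langle x,\alpha\rangle=x^*\alpha$ is computed componentwise. Hence $\Phi_X^0$ is the direct sum of the maps $\Phi_{X_i}^0$, and it suffices to treat $X$ connected.

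Assume now $X$ connected, and compute both sides. By Proposition~\ref{sushomdiv}, the structure map induces an isomorphism $\deg\colon H_0^\WS(X,\Z)\xrightarrow{\sim}\Z$. Since the total complex of the defining double complex vanishes in negative degrees, we have $H_{-1}^\WS(X,\Z)=0$, so the coefficient sequence attached to $0\to\Z\xrightarrow{m}\Z\to\Z/m\to 0$ yields $H_0^\WS(X,\Z/m)\cong H_0^\WS(X,\Z)/m\cong\Z/m$, a class being represented by a zero-cycle $x$ with invariant $\deg(x)\bmod m$. On the cohomological side, $H^0_\et(X,\Z/m)=H^0_\et(\bar X,\Z/m)^G=(\Z/m)^{\pi_0(\bar X)}$; because $X$ is connected, the Frobenius $F$ acts transitively on the finite set $\pi_0(\bar X)$, so the $G$-invariants are exactly the constant functions and $H^0_\et(X,\Z/m)\cong\Z/m$, with dual again $\Z/m$.

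Finally I would evaluate the pairing on these explicit generators. A generator $\alpha\in H^0_\et(X,\Z/m)$ corresponds, after pullback to $\bar X$, to the $G$-invariant section taking a single constant value $a\in\Z/m$ on every component of $\bar X$. For a zero-cycle $x=\sum_P n_P[P]$ on $\bar X$ one gets $x^*\alpha=\sum_P n_P\,\alpha(P)=(\deg x)\,a$. Under the identifications $[x]\leftrightarrow\deg(x)$ and $\alpha\leftrightarrow a$, the pairing becomes $(n,a)\mapsto na$ on $\Z/m\times\Z/m$, which is perfect; hence $\Phi_X^0$ is an isomorphism.

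The only genuine subtlety is that a connected $X$ need not be geometrically connected, so that $\bar X$ may have several components cyclically permuted by $F$. I expect this to be the sole point requiring care: one must use the transitivity of the Frobenius action on $\pi_0(\bar X)$ to see that the $G$-invariant sections are the constants, and invoke the full strength of Proposition~\ref{sushomdiv} (which already absorbs the field-of-constants issue via the degree map) rather than a naive reduction to a single $\F$-rational point.
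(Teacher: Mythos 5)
Your proof is correct and follows essentially the same route as the paper: reduce to connected $X$, use Proposition~\ref{sushomdiv} (plus $H_{-1}^\WS=0$) to identify $H_0^\WS(X,\Z/m)$ with $\Z/m$ via the degree, and observe that the pairing becomes multiplication $\Z/m\times\Z/m\to\Z/m$. The paper phrases the last step as a functorial reduction to $X=\Spec(\F)$ rather than evaluating the pairing in place, but this is only a cosmetic difference.
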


\begin{proof} We can assume that $X$ is connected. Then, by Proposition~\ref{sushomdiv}, the degree map induces an isomorphism $H_0^\WS(X,\Z/m)\stackrel{\sim}{\to} H_0^\WS(\F,\Z/m)\cong\Z/m$. Furthermore, $H^0_\et(\F,\Z/m) \stackrel{\sim}{\to} H_\et^0(X,\Z/m)$. Hence, by functoriality, we can reduce the statement to the case $X=\Spec(\F)$, where it is easy.
\end{proof}

\begin{theorem} \label{mainthmmodm}
For any separated scheme of finite type over a finite field $\F$, the map
\[ \Phi_X^1: H_1^{\WS}(X,\Z/m) \longrightarrow H^1_t(X,\Z/m)^* \]
is surjective. It is an isomorphism if $m$ is prime to
the characteristic or if resolutions of singularities holds for schemes of dimension $\le \dim X+1$ over $\F$.
\end{theorem}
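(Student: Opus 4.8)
The plan is to establish surjectivity unconditionally by reducing to closed points, and then to upgrade surjectivity to an isomorphism by a pure order count, treating the prime-to-$p$ and the $p$-primary part of $m$ by different means.

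\emph{Surjectivity.} Since $H^1_t(X,\Z/m)\subseteq H^1_\et(X,\Z/m)$ is finite, $\Phi^1_X$ is surjective if and only if the pairing \eqref{pairingmodm} has trivial right kernel, i.e.\ no nonzero $c\in H^1_t(X,\Z/m)$ pairs trivially with all of $H_1^\WS(X,\Z/m)$. To rule this out I would test $c$ against closed points: via Proposition~\ref{homvergleich} we view $c$ as an element of $\Hom(\pw(X)_\rW,\Z/m)=\Hom(\pi_1^{t,\ab}(X),\Z/m)$, and by Proposition~\ref{pointcase} (reduced mod $m$) pairing $c$ with the image of the class $[x]$ of a closed point under $H_0^\rS(X,\Z/m)\to H_1^\WS(X,\Z/m)$ computes $c(\Frob_x)$. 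A continuous homomorphism to the finite group $\Z/m$ that kills every $\Frob_x$ must vanish, because the Frobenii of closed points are dense in $\pi_1^{t,\ab}(X)$ by Chebotarev density. Hence any such $c$ is zero, and $\Phi^1_X$ is surjective for all $X$ and all $m$, with no resolution hypothesis.

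\emph{Isomorphism, prime-to-$p$ part.} Writing $m=p^r m'$ with $(m',p)=1$ and splitting via the Chinese remainder theorem, it suffices to treat $\Z/m'$ and $\Z/p^r$ separately. For $\Z/m'$ I would invoke the first pairing of Theorem~\ref{duall}, valid for arbitrary $X$, giving $|H_1^\WS(X,\Z/m')|=|H^1_\et(X,\Z/m')|=|H^1_t(X,\Z/m')|$, the last equality because $\Z/m'$ is $p$-torsion free so that $H^1_t=H^1_\et$. A surjection between finite groups of equal order is an isomorphism, so $\Phi^1_X$ is an isomorphism here as well, again with no resolution hypothesis. Note that I never identify \eqref{pairingmodm} with the abstract duality pairing; I only compare orders.

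\emph{Isomorphism, $p$-primary part.} Here I assume resolution of singularities in dimensions $\le\dim X+1$ and induct on $\dim X$. Suslin homology and tame cohomology are insensitive to nilpotents, so I may take $X$ reduced, and a dévissage along abstract blow-up squares with finite $\pi$ reduces further to $X$ integral. If $X$ is smooth, the second pairing of Theorem~\ref{duall} gives $|H_1^\WS(X,\Z/p^r)|=|H^1_t(X,\Z/p^r)|$, so surjectivity forces $\Phi^1_X$ to be an isomorphism. For general integral $X$, choose a resolution blow-up square \eqref{blowupdef} with $X'\to X$ projective birational, $X'$ smooth, $Z$ the non-regular locus and $Z'=Z\times_X X'$; both $Z$ and $Z'$ are proper closed subsets of integral schemes, hence of dimension $<\dim X$. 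The maps $\Phi^0,\Phi^1$ assemble into a morphism from the homology sequence of Proposition~\ref{blowupseq} to the $\Z/m$-dual of the cohomology sequence of Proposition~\ref{blowcohom}; dualizing preserves exactness for finite $\Z/m$-modules, the non-boundary squares commute by functoriality of the pairing, and the boundary square commutes up to sign by Proposition~\ref{anticommuting}. In the five-term window centred at $H_1^\WS(X,\Z/p^r)$ the four neighbouring verticals are isomorphisms: $\Phi^1_{X'}$ by the smooth case, $\Phi^1_{Z}$ and $\Phi^1_{Z'}$ by the induction hypothesis, and the two $H_0$-terms because $\Phi^0$ is an isomorphism. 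The five lemma then yields that $\Phi^1_X$ is an isomorphism.

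\emph{Main obstacle.} The technical heart is the $p$-primary smooth case, which rests on the $p$-adic duality of Theorem~\ref{duall} and therefore on resolution; the decisive simplification is that I bypass any identification of the explicit pairing \eqref{pairingmodm} with the duality pairing and instead combine the unconditional surjectivity with equality of orders. The remaining care is bookkeeping: running the five-lemma induction requires the exactness of the dualized sequence of Proposition~\ref{blowcohom} together with the sign-compatibility of $\Phi^0,\Phi^1$ with the connecting maps from Proposition~\ref{anticommuting}, and the dévissages (reduced, then integral, then resolved) must be arranged so that the inductive parameter $\dim X$ strictly decreases except in the smooth subcase, which is handled directly at each dimension.
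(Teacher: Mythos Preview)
Your surjectivity argument has a genuine gap: the claim that ``the Frobenii of closed points are dense in $\pi_1^{t,\ab}(X)$ by Chebotarev density'' is false for non-normal $X$. The paper's own introduction flags this: for the nodal curve $C$, the reciprocity map $r_C$ is \emph{not} surjective, and Section~8 makes it explicit --- the exact sequence
\[
0 \longrightarrow \CH_0(C) \stackrel{r_C}{\longrightarrow} \Pi_1^{\ab}(C)_\rW \longrightarrow H_1(\Gamma,\Z)\cong \Z \longrightarrow 0
\]
shows that the Frobenii span exactly the image of $\CH_0(C)$, while the projection $\Pi_1^{\ab}(C)_\rW\twoheadrightarrow \Z\to \Z/m$ gives a nonzero class in $H^1_t(C,\Z/m)$ that vanishes on every $\Frob_x$. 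So testing against closed points cannot kill the right kernel in general, and your prime-to-$p$ isomorphism (which rests on this surjectivity) inherits the gap.

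The fix is the very machinery you already use in the $p$-primary step. The paper runs the blow-up induction (Propositions~\ref{blowcohom}, \ref{blowupseq}, \ref{anticommuting}) \emph{first}, for both surjectivity and the isomorphism, to reduce to $X$ normal and connected: take $X'\to X$ the normalization (finite, so Proposition~\ref{blowcohom} applies with integral coefficients and hence mod $m$), use that $\Phi^0$ is always an isomorphism, and apply the four/five lemma together with the induction hypothesis on the lower-dimensional singular loci. Only then does Chebotarev--Lang density (which does hold for normal schemes) give surjectivity, and Theorem~\ref{duall} supplies the order count. Your $p$-primary induction is essentially the paper's argument; you just need to front-load that same reduction rather than invoking Chebotarev on a possibly non-normal $X$.
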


\begin{proof}
By Propositions \ref{blowcohom}, \ref{blowupseq} and \ref{anticommuting}, and induction on the dimension, we can assume
that $X$ is normal and connected. Then, by Proposition~\ref{pointcase} and Chebotarev-Lang density, the composite
\[
 H_0^\rS(X,\Z)/m \longrightarrow H_1^{\WS}(X,\Z/m)\stackrel{\Phi_X^1}{\longrightarrow} H^1_t(X,\Z/m)^*\]
is surjective, hence so is $\Phi_X^1$.
To get the isomorphism, we note that by Theorem~\ref{duall}, the
source and the target of $\Phi_X^1$ have the same order under the given hypothesis.
\end{proof}

\begin{proof}[Proof of Theorem~\ref{maintheorem}]
Consider the diagram (for any $m$)
\[ \begin{tikzcd}
H_1^{\WS}(X,\Z) \dar[swap]{\varphi}\rar{\rec_X}&  \pw(X)_\rW \dar{\psi}\\
H_1^{\WS}(X,\Z/m )\rar{\Phi_X^1}& \pw(X)_\rW/m.
\end{tikzcd}
\]
The composite $\Phi_X^1\circ \varphi$ is surjective by Corollary~\ref{sushomdivcor} and Theorem~\ref{mainthmmodm}. Hence the cokernel of $\rec_X$ is divisible.
Since  $\pw(X)_\rW$  is finitely generated, all divisible elements of $H_1^{\WS}(X,\Z)$ are in
the kernel of $\rec_X$, and the cokernel of $\rec_X$ is
finitely generated and divisible, hence trivial.

Now assume that resolution of singularities holds for schemes of dimension $\le \dim X+1$ over $\F$.
Then $\Phi_X^1$ is an isomorphism for all $m$. Hence the kernel of $\rec_X$ is the set $\div H_1^\WS(X,\Z)$ of  divisible elements. This agrees with the
maximal divisible subgroup by the following Lemma~\ref{divlemma}.
\end{proof}

\begin{lemma}\label{divlemma} Let $A$ be an abelian group.
If $A/\div A$ is finitely generated, then $\div A$ is divisible.
\end{lemma}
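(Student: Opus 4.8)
The plan is to write $D := \div A = \bigcap_{n\geq 1} nA$ for the subgroup of divisible elements of $A$ and to prove the equivalent assertion that $nD = D$ for every $n \geq 1$. The inclusion $nD \subseteq D$ is automatic, so everything reduces to showing $D \subseteq nD$, i.e. that each $a \in D$ admits a factorization $a = nb$ with $b$ again lying in $D$. Throughout I would work with the projection $\pi\colon A \to Q := A/D$, whose kernel is exactly $D$; by hypothesis $Q$ is finitely generated, and this is the only place the assumption enters.

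First I would exploit that $a$ is divisible in the strong sense: for every $m \geq 1$ one has $a \in (nm)A$, so I may choose $c_m \in A$ with $a = nm\,c_m$ and set $b_m := m\,c_m$. Each $b_m$ is a preimage of $a$ under multiplication by $n$ which additionally lies in $mA$. Projecting to $Q$, the element $\pi(b_m) = m\,\pi(c_m)$ lies in $mQ$, while $n\pi(b_m) = \pi(nb_m) = \pi(a) = 0$ (since $a \in D = \ker\pi$) forces $\pi(b_m) \in Q[n]$. Hence $\pi(b_m) \in mQ \cap Q[n]$ for every $m$.

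The decisive step is to use the finite generation of $Q$ to annihilate these classes. For a finitely generated group one has $\bigcap_m mQ = 0$ and $Q[n]$ is finite; consequently the decreasing family of subgroups $mQ \cap Q[n]$ (directed by divisibility of $m$) has trivial intersection and, living inside a finite group, must stabilize at $0$. Concretely there is an integer $m^*$ with $m^*Q \cap Q[n] = 0$. For this $m^*$ the element $\pi(b_{m^*})$ vanishes, so $b_{m^*} \in \ker\pi = D$, while $n\,b_{m^*} = a$ by construction. This exhibits $a \in nD$ and completes the argument.

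I expect the main obstacle to be conceptual rather than computational: in general $\bigcap_n nA$ (the first Ulm subgroup) strictly contains the maximal divisible subgroup and need not itself be divisible, so a naively chosen preimage $b_0$ with $nb_0 = a$ will \emph{not} in general lie in $D$. The whole force of the hypothesis is that finite generation of $Q = A/D$ makes the obstruction group $Q[n]$ finite and $\bigcap_m mQ$ trivial, which is precisely what lets one select a preimage landing in $D$. As a byproduct, once $D$ is shown to be divisible it coincides with the maximal divisible subgroup of $A$, which is the form in which the lemma is used in the proof of Theorem~\ref{maintheorem}.
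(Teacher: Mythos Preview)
Your argument is correct and complete. It differs genuinely from the paper's proof, so a brief comparison is in order.

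The paper argues globally via a splitting: writing $B=A/\div A$, it picks $n$ with $nB$ free (structure theorem), pulls back to $C=\pi^{-1}(nB)$, observes $\div C=\div A$ from $nA\subseteq C\subseteq A$, and then splits $0\to \div A\to C\to nB\to 0$ by freeness to get $C\cong \div A\oplus nB$, whence $\div A=\div C=\div(\div A)$. This is a three-line argument once the splitting is in hand.

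You instead work element by element: for $a\in D$ and fixed $n$ you produce a family of $n$-th roots $b_m\in mA$ and push the obstruction $\pi(b_m)$ into $mQ\cap Q[n]$; finite generation of $Q$ makes $Q[n]$ finite and $\bigcap_m mQ=0$, forcing some $\pi(b_{m^*})=0$. Your approach is slightly longer but more transparent about \emph{where} the hypothesis is consumed---finiteness of the $n$-torsion in the quotient is exactly the obstruction to lifting divisibility, and you kill it directly. The paper's version hides this in the single word ``free''. Both routes invoke the structure theorem for finitely generated abelian groups at the same depth; neither is more general than the other.
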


\begin{proof}
Let $B= A/\div A$, choose an integer $n$ such that $nB$ is free, and
let $C\subseteq A$ be the inverse image of $nB$ in $A$.
Then $\div C=\div A$ because $nA\subseteq C\subseteq A$. By
freeness of $nB$, we obtain that $C=nB \oplus \div A$, hence
$\div A=\div C = \div \div A$.
\end{proof}

\section{The case of proper curves}

In this section we illustrate the results of this paper in the case of proper curves.

\medskip\noindent
Let $C$ be a connected, proper curve over $\F$. The morphism $C_\red \to C$ induces an isomorphism on Weil-Suslin homology as well as on fundamental groups. We therefore may assume that $C$ is reduced.  We first note that
\[
H_1^\WS(C,\Z)
\]
is a finitely generated abelian group.
This follows by applying Proposition~\ref{blowupseq} to the normalization morphism $\widetilde C \to C$ and by using Examples~\ref{wsofpoint} and \ref{wssmothcurve}. Furthermore, resolution of singularities holds for schemes of dimension $\le 2$ over $\F$. Hence Theorem~\ref{maintheorem} yields the reciprocity \emph{isomorphism}
\[
\rec_C:  H_1^\WS(C,\Z) \stackrel{\sim}{\longrightarrow} \Pi_1^\ab(C)_\rW.
\]
Since $C$ is proper, we have $\CH_0(C)=H_0^\rS(C,\Z)$, hence \eqref{sustows} yields an injection
\[
\phi: \CH_0(C) \hookrightarrow H_1^\WS(C,\Z).
\]
By \cite[Thm.\,6.2 and Prop.\,6.3]{tho-arithm}, $\coker(\phi)$ is isomorphic to $H_1(\Gamma,\Z)$,  where $\Gamma$ is the dual graph associated with the curve $C$. We obtain an exact diagram
\[
\begin{tikzcd}
0\rar&\CH_0(C)\dar[equal]\rar{\phi} & H_1^\WS(C,\Z)\rar\dar{\rec_C}[swap]{\wr} & H_1(\Gamma,\Z)\rar\dar[equal]&0\phantom{,}\\
0\rar&\CH_0(C)\rar{r_C} &  \Pi_1^\ab(C)_\rW \rar& H_1(\Gamma,\Z)\rar&0,
\end{tikzcd}
\]
where, by Proposition~\ref{pointcase}, $r_C$ is the map which sends the class of a closed point to its Frobenius automorphism.  Denoting by $\widehat{A}$ the profinite completion of an abelian group $A$,  completion of the lower line yields the exact sequence
\begin{equation}\label{kssequence}
0 \longrightarrow \widehat{\CH_0(C)} \longrightarrow \pi_1^\ab(C) \longrightarrow \widehat{H_1(\Gamma,\Z)} \longrightarrow 0,
\end{equation}
which is the exact sequence of \cite[Prop.\,1]{katosaito}.

\bigskip \noindent
Finally, we consider the example of a nodal curve.  Let $\widetilde{C}$ be a  smooth, proper curve over $\F$ admitting two rational points $P,Q$, and let $C$ be the curve obtained by identifying $P$ and $Q$. We denote the image of $P$ and $Q$ in $C$ by $O$, i.e., we have an (abstract) blow-up square
\begin{equation} \label{nodeblow}
\begin{tikzcd}
\{P,Q\} \rar\dar & \widetilde{C}\dar\\
\{O\}\rar &C.
\end{tikzcd}
\end{equation}

Let $D$ be a countable chain of copies $C_i$, $i\in \Z$, of $C$ with  $P_i \in C_i$ identified with $Q_{i-1}\in C_{i-1}$ for all $i\in \Z$:
\[
\begin{tikzpicture}[xscale=.7,yscale=.4]
\node at (-1,2) {$D:$};
\node at (1,2) {$\cdots$};
\node at (4,1) {$\bullet$};
\node at (6,3) {$\bullet$};
\node at (8,1) {$\bullet$};
\node at (10,3) {$\bullet$};
\node at (12,1) {$\bullet$};
\draw (5,0) -- (3,2);
\draw (3,0) --(7,4);
\draw (5,4) --(9,0);
\draw (7,0) --(11,4);
\draw (9,4) --(13,0);
\draw (11,0) --(13,2);
\node at (15,2) {$\cdots$};
\end{tikzpicture}
\]
Translation gives a natural $\Z$-action on $D$ and $C$ is the quotient $D/\Z$ with respect to this action. In particular, we have a surjection $\Pi_1^\ab(C) \twoheadrightarrow \Aut_C(D)=\Z$.
Inspecting the combinatorics of $D$,  we see that
\[
\ker(\Pi_1^\ab(C) \to \Z)\cong \Pi_1^\ab(\widetilde{C}) / (\Frob_P-\Frob_Q).
\]
Hence, we have an exact sequence
\begin{equation}\label{pionenode}
0 \longrightarrow \Pi_1^\ab(\widetilde{C})_W/(\Frob_P-\Frob_Q) \longrightarrow \Pi_1^\ab(C)_W \longrightarrow \Z \longrightarrow 0.
\end{equation}
In particular,
\[
\Pi_1^\ab(C)_W \cong \Z \oplus \Z \oplus \textrm{(finite)}
\]
as an abelian group. Applying Proposition~\ref{blowupseq} to the blow-up square \eqref{nodeblow} and using Examples~\ref{wsofpoint} and \ref{wssmothcurve}, we obtain the exact sequence
\begin{equation}\label{wsnode}
0 \longrightarrow \CH_0(\widetilde{C})/ ([P]-[Q]) \longrightarrow H_1^\WS(C,\Z) \longrightarrow \Z \longrightarrow 0.
\end{equation}
The reciprocity map $\rec_C$ induces an isomorphism between the exact sequences \eqref{wsnode}
and \eqref{pionenode}. The map on the left hand side is induced by the reciprocity map of the smooth curve $\widetilde{C}$, which sends $[x]\in \CH_0(\widetilde{C})$ to $\Frob_x \in \Pi_1^\ab(\widetilde{C})_W=\pi_1^\ab(\widetilde{C})_W$.

\vskip1cm
\small

{\sc  Rikkyo University, Department of Mathematics, 3-34-1 Nishi-Ikebukuro, Toshima-ku,
Tokyo Japan 171-8501}

\textit{E-mail address:} {\tt geisser@rikkyo.ac.jp}

\medskip
{\sc  Universit\"{a}t Heidelberg, Mathematisches Institut, Im Neuenheimer Feld 205, D-69120 Heidelberg, Deutschland}

\textit{E-mail address:} {\tt schmidt@mathi.uni-heidelberg.de}

\begin{thebibliography}{SGA3}
\bibitem[DG]{dg} { M.\  Demazure, P.\ Gabriel},
Introduction to algebraic geometry and algebraic groups. Translated from the French by J.\  Bell. North-Holland Mathematics Studies, 39. North-Holland Publishing Co., Amsterdam-New York, 1980.

\bibitem[Ge1]{tho-weil} { T.\ Geisser}, Weil-\'etale cohomology over finite
fields. Math.\  Ann.\  330 (2004), no.\  4, 665--692.

\bibitem[Ge2]{ge-duke} T.\ Geisser, Arithmetic cohomology over finite fields and special values of $\zeta$-functions. Duke Math.\ J.\ 133 (2006), no. 1, 27--57.

\bibitem[Ge3]{geissersuslin} { T.\ Geisser},
On Suslin's singular homology and cohomology.
Doc.\  Math.\  2010, Extra volume: Andrei A.\  Suslin's sixtieth birthday, 223--249.

\bibitem[Ge4]{tho-duality} { T.\ Geisser}, Duality via cycle complexes,
Ann.\  of Math.\  (2) 172 (2010), no.\  2, 1095--1126.


\bibitem[Ge5]{tho-arithm} { T.\ Geisser}, Arithmetic homology and an integral version of Kato's conjecture. J.\ Reine Angew.\ Math. 644 (2010), 1--22.

\bibitem[GS]{tho-alex} { T.\ Geisser, A.\ Schmidt},
Tame Class Field Theory for Singular
Varieties over Algebraically Closed Fields. Documenta Math.\  21 (2016), 91--123.

\bibitem[KaSa]{katosaito} { K.\ Kato, S.\ Saito}, Unramified class field theory of arithmetical surfaces.
Ann.\  of Math.\  (2) 118 (1983), no.\  2, 241--275.

\bibitem[KeSa]{KS} { M.\ Kerz, S.\ Saito}, Cohomological Hasse principle and motivic cohomology
for arithmetic schemes. Publ.\  Math.\  Inst.\  Hautes Etudes Sci.\  115 (2012), 123--183.

\bibitem[KeSc]{kerzschmidt} M.\ Kerz, A.\ Schmidt, On different notions of tameness in arithmetic geometry.
Math. Ann.\ 346 (2010), 641--668.

\bibitem[Li]{Li} S.\ Lichtenbaum, Suslin homomology and Deligne 1-motives. In P.~G.~Goerss, J.~F.~Jardine (ed.): Algebraic K-theory and Algebraic Topology 1993, 189--196.

\bibitem[MAS]{mas} { K.\ Matsumi, K.\ Sato, M.\ Asakura},
On the kernel of the reciprocity map of normal surfaces over finite fields.
K-Theory 18 (1999), no.\  3, 203--234.

\bibitem[MVW]{MVW} C.\ Mazza, V.\ Voevodsky and Ch.\ Weibel \emph{Lectures on Motivic Cohomo\-lo\-gy}.  Clay Monographs in Math. 2, AMS 2006.

\bibitem[Mi]{milnebook} { J.\ S.\ Milne},
Etale cohomology. Princeton Mathematical Series, 33. Princeton University Press, Princeton, N.\ J.\ , 1980.

\bibitem[Mu]{mumford} { D.\ Mumford}, Abelian varieties.
Corrected reprint of the second (1974) edition. Tata Institute of Fundamental Research Studies in Mathematics,~5.  Hindustan Book Agency, New Delhi, 2008.

\bibitem[Sc]{schmidt} { A.\ Schmidt}, Singular homology of arithmetic schemes.
Algebra \& Number Theory 1 (2007), no.\  2, 183--222.

\bibitem[SS]{schmidtspiess}{ A.\ Schmidt, M.\ Spie{\ss}},
Singular homology and class field theory of varieties over finite fields.
J.\  Reine Angew.\  Math.\  527 (2000), 13--36.

\bibitem[SGA1]{sga1}
Rev\^{e}tements \'{e}tales et groupe fondamental. Lecture Notes in Mathematics 224,
Springer-Verlag, Berlin, 1971.
S\'{e}minaire de G\'{e}om\'{e}trie Alg\'{e}brique du Bois Marie
  1960--1961 (SGA 1), Dirig\'{e} par A.~Grothendieck. Augment\'{e} de
  deux expos\'{e}s de M.~Raynaud.

\bibitem[SGA3]{sga3}
Sch\'{e}mas en groupes. II: Groupes de type multiplicatif, et structure des sch\'{e}mas en groupes g\'{e}n\'{e}raux.
Lecture Notes in Mathematics 152,
Springer-Verlag, Berlin-New York, 1970,
S\'{e}minaire de G\'{e}om\'{e}trie Alg\'{e}brique du Bois Marie 1962--64 (SGA 3 II),
Dirig\'{e} par M.~Demazure et A.~Grothendieck.


\bibitem[SGA7]{sga7} Groupes de monodromie en g\'{e}om\'{e}trie alg\'{e}brique. II. S\'{e}minaire de G\'{e}o\-m\'{e}trie Alg\'{e}brique du Bois-Marie 1967--1969 (SGA 7 II). Dirig\'{e} par P.\ Deligne et N.\ Katz. Lecture Notes in Mathematics, Vol. 340. Springer-Verlag, Berlin-New York, 1973.



\bibitem[SV1]{susvoe}{ A.\ A.\ Suslin, V.\ Voevodsky}, Singular homology of abstract
algebraic varieties.  Inv.\  Math.\   123  (1996),  no.\  1, 61--94.

\bibitem[SV2]{blowup} A.\ A.\ Suslin, V.\ Voevodsky \emph{Bloch-Kato conjecture and motivic cohomology with finite coefficients.} The arithmetic and geometry of algebraic cycles (Banff, AB, 1998), 117-189, NATO Sci. Ser. C Math. Phys. Sci., 548, Kluwer Acad. Publ., Dordrecht, 2000.
\end{thebibliography}
\end{document}